\numberwithin{equation}{section}
\newtheorem{theorem}{Theorem}[section]
\newtheorem{lemma}[theorem]{Lemma}
\newtheorem{claim}[theorem]{Claim}
\newtheorem{proposition}[theorem]{Proposition}
\theoremstyle{definition}
\newtheorem{assumption}{Assumption}[section]
\newtheorem{remark}{Remark}[section]
\begin{document}

\begin{frontmatter}
\title{A lower bound on the queueing delay in resource constrained load balancing\thanksref{T1}\thanksref{T2}}
\thankstext{T1}{Research supported in part by an MIT Jacobs Presidential Fellowship, the NSF grant CMMI-1234062, and the ONR grant N00014-17-1-2790.}
\thankstext{T2}{A preliminary version of this paper appeared in the Proceedings of the 2016 ACM Sigmetrics Conference \cite{sigmetrics16}.}

\begin{aug}
\author{\fnms{David} \snm{Gamarnik}\ead[label=e1]{gamarnik@mit.edu}},
\author{\fnms{John N.} \snm{Tsitsiklis}\ead[label=e2]{jnt@mit.edu}}
\and
\author{\fnms{Martin} \snm{Zubeldia}\ead[label=e3]{zubeldia@mit.edu}}


\affiliation{Massachusetts Institute of Technology}

%

\end{aug}


\begin{abstract}
We consider the following distributed service model: jobs with unit mean, general distribution, and independent processing times arrive as a renewal process of rate $\lambda n$, with $0<\lambda<1$, and are immediately dispatched to one of several queues associated with $n$ identical servers with unit processing rate. We assume that the dispatching decisions are made by a central dispatcher endowed with a  finite memory, and with the ability to exchange messages with the servers.

We study the fundamental resource requirements (memory bits and message exchange rate), in order to drive the expected queueing delay in steady-state of a typical job to zero, as $n$ increases. We develop a novel approach to show that, within a certain broad class of ``symmetric'' policies, every dispatching policy with a message rate of the order of $n$, and with a memory of the order of $\log n$ bits, results in an expected queueing delay which is bounded away from zero, uniformly as $n\to\infty$.
\end{abstract}

%

\end{frontmatter}

\setcounter{tocdepth}{2}
\tableofcontents

\section{Introduction}
Distributed processing systems are ubiquitous, from passport control at the airport and checkout lines at the supermarket, to call centers and server farms for cloud computing. Many of these systems involve a stream of incoming jobs dispatched to
distributed queues, with each queue associated to a different server;  see Figure \ref{fig:basicSetting} for a stylized model.
Naturally, the performance of such systems depends critically on the policy used to dispatch jobs to queues.

 \begin{figure}[ht!]
 \centering
 \begin{tikzpicture}[scale=0.7]
    \draw [very thick,->] (-0.5,0) -> (1.5,0);
    \draw (0.5,0.5) node {Incoming jobs};
     \draw [thick] (2,-1) -- (2,1) -- (4.5,1) -- (4.5,-1) -- (2,-1);
     \draw (3.25,0) node {Dispatcher};
     \draw [thick, ->] (4.5,0.7) -- (6.5,2.8);
     \draw [thick, ->] (4.5,-0.7) -- (6.5,-2.8);
     \draw [thick, ->] (4.5,0.4) -- (6.5,1.4);
     \draw [thick, ->] (4.5,-0.4) -- (6.5,-1.4);
     \begin{scope}[yshift=3cm,xshift=5cm]
          \draw [thick] (2,0.5) -- (3.5,0.5) -- (3.5,-0.5) -- (2,-0.5);
          \draw (4.35,0) circle (5mm);
     \end{scope}
     \begin{scope}[yshift=1.5cm,xshift=5cm]
          \draw [thick] (2,0.5) -- (3.5,0.5) -- (3.5,-0.5) -- (2,-0.5);
          \draw (4.35,0) circle (5mm);
     \end{scope}
     \begin{scope}[xshift=5cm]
          \draw (2.75,0) node {\Huge\vdots};
     \end{scope}
     \begin{scope}[yshift=-3cm,xshift=5cm]
          \draw [thick] (2,0.5) -- (3.5,0.5) -- (3.5,-0.5) -- (2,-0.5);
          \draw (4.35,0) circle (5mm);
     \end{scope}
     \begin{scope}[yshift=-1.5cm,xshift=5cm]
          \draw [thick] (2,0.5) -- (3.5,0.5) -- (3.5,-0.5) -- (2,-0.5);
          \draw (4.35,0) circle (5mm);
     \end{scope}
          \draw [thick,decorate,decoration={brace,amplitude=10pt}]   (10,3.5) -- (10,-3.5) node [midway,right,xshift=.5cm] {Servers};
  \end{tikzpicture}
  \caption{Parallel server queueing system with a central dispatcher.}
    \label{fig:basicSetting}
  \end{figure}
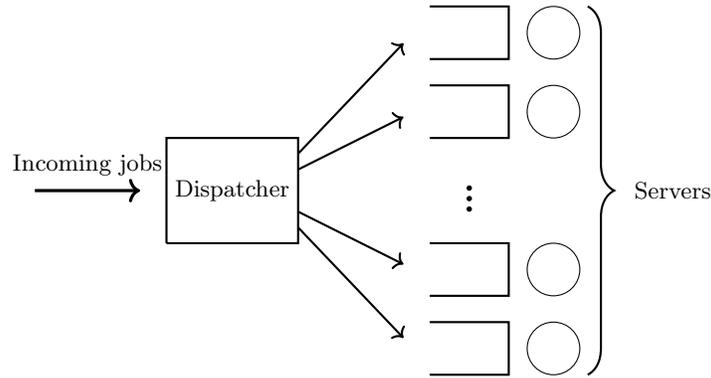

In order to take full advantage of the multiple servers, the dispatcher can benefit from information about the current state of the queues (e.g., whether they are empty or not).
For such information to be available when a job arrives to the system and a dispatching decision is to be made, it is necessary that the dispatcher periodically obtain information about the current queue states from the corresponding servers  and/or have sufficient memory that allows it to extrapolate from the available information. In this paper, we explore the tradeoff between the performance of the system and the amount of resources used to gather and maintain relevant information.

There is a variety of ways in which the system described above can be operated; these correspond to different dispatching policies, and result in different performance and resource utilization. At one extreme, the dispatcher can route incoming jobs to a queue chosen uniformly at random. This policy requires no information on the state of the queues, but the jobs experience considerable delays. At the other extreme, the dispatcher can send incoming jobs to a queue with the smallest number of jobs or to a queue with the smallest workload. The jobs in these last two policies experience little or no delay, but  substantial communication overhead between the dispatcher and the servers is required.

Many intermediate policies have been proposed and analyzed in the past (e.g., the power-of-$d$-choices \cite{mitzenmacher,vvedenskaya} or join-an-idle-queue \cite{badonnelBurgess,joinIdleQueue,stolyar14}), and they use varying amounts of resources to achieve different levels of delay performance. A more detailed discussion of the relevant literature and the various schemes therein is deferred to Section \ref{sec:review}.

Instead of focusing on yet another policy or decision making architecture, we step back and address a more fundamental question: what is the minimum amount of resources required to obtain the best possible performance, as the number of server increases? Regarding performance, we focus on the expected time that a job has to wait before starting service. Regarding resources, we focus on the average number of messages exchanged between the dispatcher and the servers per unit of time, and on the number of bits of ``long term" memory that the dispatcher has at its disposal.

Some performance-resources tradeoffs similar to the ones we study have been analyzed in the context of the balls into bins model \cite{ballsAndBins}, in which $n$ balls are to be placed sequentially into $n$ bins. In particular, the tradeoff between number of messages and maximum load was recently characterized by Lenzen and Wattenhofer \cite{tightBounds}. Furthermore, the tradeoff between memory size and maximum load was studied in \cite{choiceMemoryTradeoff,memoryPerformanceTradeoffs}.

\subsection{Our contribution}
We consider a broad family of decision making architectures and policies, which includes most of those considered in the earlier literature, and work towards characterizing the attainable delay performance for a given level of resources. We allow the dispatcher to have a limited memory, where it can store information on the state of the queues. We also allow the exchange of messages, either from the dispatcher to the servers (queries), or from the servers to the dispatcher (responses to queries or spontaneous status updates).

We show that if the average message rate is at most of the order of the arrival rate, and the memory size (in bits) is at most of the order of the logarithm of the number of servers, then \emph{every} decision making architecture and policy, within a certain broad class of dispatching policies, results in a queueing delay that does {\bf not} vanish as the system size increases. In particular, we show that the expected queueing delay in steady-state of a typical job is uniformly bounded away from zero as the number of servers goes to infinity. The main constraints that we impose on the policies that we consider are: (i) there is no queueing at the dispatcher, i.e., each job is immediately dispatched to one of the parallel queues, and (ii) policies are symmetric, in a sense to be made precise later.

\begin{remark}
 For the case of Poisson arrivals and exponential service times, in \cite{positiveResult}, the authors have shown that if either (i) the message rate is superlinear in the arrival rate and the memory is logarithmic in the number of servers, {\bf or} (ii) the memory size is superlogarithmic in the number of servers and the message rate is greater than or equal to the arrival rate, then there exists a dispatching policy that results in a vanishing queueing delay as the system size increases. This gives sufficient conditions to achieve a vanishing queueing delay that are complementary to the necessary conditions obtained in this paper.
\end{remark}

\subsection{Outline of the paper}
The remainder of the paper is organized as follows. In Section \ref{sec:notation} we introduce some notation. The model and the main result are presented in Section \ref{sec:results}. In Section \ref{sec:review} we discuss our result in the context of some concrete dispatching policies from the earlier literature. In Section \ref{sec:proof_imposs} we provide the proof of our main result. Finally, in Section \ref{sec:conclusions} we present our conclusions and suggestions for future work.

\section{Notation}\label{sec:notation}
In this section we introduce some notation that will be used throughout the paper.
For any positive functions $f$ and $g$, we write
$$  f(n)\in \omega(g(n)) \,\,\ \ \text{if and only if}\,\,\ \  \liminf\limits_{n\to \infty} \frac{f(n)}{g(n)} = \infty.
$$
We let $[\,\cdot\,]^+\triangleq \max\{\,\cdot\,,0\}$. We let $\mathbb{Z}_+$ and $\mathbb{R}_+$ be the sets of nonnegative integers and real numbers, respectively. The indicator function is denoted by $\mathds{1}$, so that $\mathds{1}_A(x)$ is $1$ if $x\in A$, and is $0$ otherwise. Given a set $A$, its power set, the set of all subsets of $A$, is denoted by $\mathcal{P}(A)$.
Random variables will always be denoted by upper case symbols. Non-random quantities will generally --- but not always --- be denoted by lower case symbols; exceptions will be pointed out as necessary.

We will use boldface fonts to denote vectors. If $\bf v$ is a vector, we denote its $i$-th component by ${\bf v}_i$. We will denote the (unordered) set of elements of a vector by using the superscript ``set''; for example, if ${\bf v}=(2,1,3,1)$, then ${\bf v}^{set}=\{1,2,3\}$. Furthermore, we will use $|{\bf v}|$ to denote the {\bf dimension} of a vector $\bf v$. If ${\bf v}=({\bf v}_1,\ldots,{\bf v}_m)$ is a vector, and $\bf u$ is a vector with entries in $\{1,\ldots,m\}$, then $\bf v_u$ is a $|{\bf u}|$-dimensional vector whose $i$-th component is ${\bf v}_{{\bf u}_i}$; for example, if ${\bf u}=(3,1)$, then ${\bf v_u}=({\bf v}_3,{\bf v}_1)$.

For any positive integer $n$, we define the sets $\mathcal{N}_n\triangleq\{1,\dots,n\}$, and
\begin{equation}\label{eq:defSn}
 \mathcal{S}_n\triangleq \left\{ {\bf s} \in \bigcup_{i=0}^n (\mathcal{N}_n)^i: \text{there are no repeated elements in }{\bf s} \right\},
\end{equation}
where $(\mathcal{N}_n)^0=\{\emptyset\}$. We say that a permutation $\sigma:\mathcal{N}_n\to\mathcal{N}_n$ {\bf fixes a set} $R\subset\mathcal{N}_n$ if $\sigma(i)=i$, for all $i\in R$.  Furthermore, we say that a permutation $\sigma$ {\bf preserves the ordering} of a subset $A\subset\mathcal{N}_n$ if $\sigma(i)<\sigma(j)$ whenever $i,j\in A$ and $i<j$. We use $P_k$ to denote the set of permutations of the set $\{1,\dots,k\}$. If ${\bf v}=({\bf v}_1,\ldots,{\bf v}_m)$ is a vector in $(\mathcal{N}_n)^m$ and $\sigma\in P_n$ is a permutation of $\mathcal{N}_n$, we denote by $\sigma({\bf v})$ the vector  $(\sigma({\bf v}_1),\ldots,\sigma({\bf v}_m))$. Finally, for any function $X(\cdot)$ of time, and any $t\in\mathbb{R}$, we let $X(t^-)=\lim_{\tau\uparrow t}X(\tau)$, as long as the limit exists.

\section{Model and main results}\label{sec:results}
In this section we present our main result. We first present a unified framework that defines a broad set of dispatching policies, which includes most of the policies studied in previous literature. We then present our negative result on the expected queueing delay under resource constrained policies within this set of policies.

\subsection{Modeling assumptions}\label{sec:model_assumption}
We consider a system consisting of $n$ parallel servers, where each server has a processing rate equal to $1$. Furthermore, each server is associated with an infinite capacity FIFO queue. Jobs arrive to the system as a single renewal process of rate $\lambda n$, for some fixed $\lambda<1$. Job sizes are i.i.d., independent from the arrival process, and have an arbitrary distribution with mean $1$. We use the convention that a job that is being served remains in queue until its processing is completed. We assume that each server is work-conserving: a server is idle if and only if the corresponding queue is empty.

A central controller (dispatcher) is responsible for routing each incoming job to a  queue, immediately upon arrival. The dispatcher has limited information on the state of the queues; it can only rely on a limited amount of local memory and on messages that provide partial information about the state of the system. These messages (which are assumed to be instantaneous) can be sent from a server to the dispatcher at any time, or from the dispatcher to a server (in the form of queries) at the time of an arrival. Messages from a server can only contain information about the state of its own queue (number of remaining jobs and the remaining workload of each one). Within this context, a system designer has the freedom to choose a messaging policy, as well as the rules for updating the memory and for selecting the destination of an incoming job.

We are interested in the case where $n$ is very large, in the presence of constraints on the rate of message exchanges and on the memory size. The performance metric that we focus on is the {\bf expected queueing delay in steady-state of a typical job}, i.e., the expected time between its arrival and the time at which it starts receiving service. We will formalize this definition in Section \ref{sec:imposs}.

\subsection{Unified framework for dispatching policies} \label{sec:unifiedFramework}
In this subsection we present a unified framework that describes memory-based dispatching policies. In order to do this, we introduce a sample path construction of the evolution of the system under an arbitrary policy.

Let $c_n$ be the number of memory bits available to the dispatcher. We define the corresponding set of memory states to be $\mathcal{M}_n \triangleq \left\{1,\dots,2^{c_n}\right\}$. 
Furthermore, we define the set of possible states at a server as the set of nonnegative sequences $\mathcal{Q}\triangleq\mathbb{R}_+^{\mathbb{Z}_+}$, where a sequence specifies the remaining workload of each job in that queue, including the one that is being served. (In particular, an idle server is represented by the zero sequence.) As long as a queue has a finite number of jobs, the queue state is a sequence that has only a finite number of non-zero entries. The reason that we include the workload of the jobs in the state is that we wish to allow for a broad class of policies, that can take into account the remaining workload in the queues.
In particular, we allow for information-rich messages that describe the full workload sequence at the server that sends the message. We are interested in the process
\[ {\bf Q}(t)= \big({\bf Q}_1(t),\dots,{\bf Q}_n(t)\big) = \Big(\big({\bf Q}_{1,j}(t)\big)_{j=1}^\infty,\dots,\big({\bf Q}_{n,j}(t)\big)_{j=1}^\infty\Big)\in \mathcal{Q}^n,\]
which describes the evolution of the workload of each job in each queue. Here ${\bf Q}_{i,j}(t)$ is the remaining workload of the $j$-th job in the $i$-th queue, at time $t$, which for $j\geq 2$ is simply the job's service time. We are also interested in the process $M(t)\in \mathcal{M}_n$ that describes the evolution of the memory state, and in the process $Z(t)\in\mathbb{R}_+$ that describes the remaining time until the next arrival of a job.

\subsubsection{Fundamental processes and initial conditions}\label{s:fund}
The processes of interest will be driven by certain common fundamental processes.
\begin{enumerate}
  \item {\bf Arrival process:} A delayed renewal counting process $A_n(t)$ with rate $\lambda n$, and event times $\{T_k\}_{k=1}^\infty$, defined on a probability space $(\Omega_A,\mathcal{A}_A,\mathbb{P}_A)$.
  \item {\bf Spontaneous messages process:} A Poisson counting process $R_n(t)$ with rate $\mu n$, and event times $\{T^s_k\}_{k=1}^\infty$, defined on a probability space $(\Omega_R,\mathcal{A}_R,\mathbb{P}_R)$.
  \item {\bf Job sizes:} A sequence of i.i.d. random variables $\{W_k\}_{k=1}^\infty$ with mean one, defined on a probability space $(\Omega_W,\mathcal{A}_W,\mathbb{P}_W)$.
  \item {\bf Randomization variables:} Four independent and individually i.i.d. sequences of random variables $\{U_k\}_{k=1}^\infty$, $\{V_k\}_{k=1}^\infty$, $\{X_k\}_{k=1}^\infty$, and $\{Y_k\}_{k=1}^\infty$,  uniform on $[0,1]$, defined on a common probability space $(\Omega_U,\mathcal{A}_U,\mathbb{P}_U)$.
  \item {\bf Initial conditions:} Random variables ${\bf Q}(0)$, $M(0)$, and $Z(0)$, defined on a common probability space $(\Omega_0,\mathcal{A}_0,\mathbb{P}_0)$.
\end{enumerate}
The whole system will be defined on the associated product probability space
\[ \big(\Omega_A\times\Omega_R\times\Omega_W\times\Omega_U\times\Omega_0, \mathcal{A}_A\times\mathcal{A}_R\times\mathcal{A}_W\times\mathcal{A}_U\times\mathcal{A}_0, \mathbb{P}_A\times\mathbb{P}_R\times\mathbb{P}_W\times\mathbb{P}_U\times\mathbb{P}_0\big), \]
to be denoted by $(\Omega,\mathcal{A},\mathbb{P})$. All of the randomness in the system originates from these fundamental processes, and everything else is a deterministic function of them.

\subsubsection{A construction of sample paths}\label{sec:samplePaths}
We consider some fixed $n$, and provide a construction of a Markov process $({\bf Q}(t),M(t),Z(t))$, that takes values in the set $\mathcal{Q}^n\times\mathcal{M}_n\times\mathbb{R}_+$. The memory process $M(t)$ is piecewise constant, and can only jump at the time of an event. All processes to be considered will have the c\`adl\`ag property (right-continuous with left limits) either by assumption (e.g., the underlying fundamental processes) or by construction.

There are three types of events: job arrivals, spontaneous messages, and service completions. We now describe the sources of these events, and what happens when they occur.\\

\noindent{\bf Job arrivals:} At the time of the $k$-th event of the arrival process $A_n$, which occurs at time $T_k$ and involves a job with size $W_k$, the following transitions happen sequentially but instantaneously.
\begin{enumerate}
  \item First, the dispatcher chooses a vector of distinct servers ${\bf S}_k$, from which it solicits information about their state, according to
  \[ {\bf S}_k=f_1\Big(M\left(T_k^-\right),W_k,U_k\Big), \]
  where $f_1:\mathcal{M}_n\times\mathbb{R}_+\times[0,1]\to\mathcal{S}_n$ is a measurable function defined by the policy. Note that the set of servers that are sampled only depends on the current memory state and on the size of the incoming job, but it is chosen in a randomized way, thanks to the independent random variable $U_k$. Thus, we allow for randomized policies; for example, the dispatcher might choose to sample a fixed number of servers uniformly at random.
  \item Then, messages are sent to the servers in the vector ${\bf S}_k$, and the servers respond with messages containing their queue states; thus, the information received by the dispatcher is the vector ${\bf Q}_{{\bf S}_k}$. This results in $2|{\bf S}_k|$ messages exchanged. Using this information, the destination of the incoming job is chosen to be
  \[ D_k=f_2\Big(M\big(T_k^-\big),W_k,{\bf S}_k,{\bf Q}_{{\bf S}_k}\big(T_k^-\big),V_k\Big), \]
  where $f_2:\mathcal{M}_n\times\mathbb{R}_+\times\mathcal{S}_n\times \big(\cup_{i=0}^n\mathcal{Q}^i\big)\times[0,1]\to\mathcal{N}_n$ is a measurable function defined by the policy. Note that the destination of a job can only depend on the current memory state, the job size, as well as the vector of queried servers and the state of their queues, but it is chosen in a randomized way, thanks to the independent random variable $V_k$. Once again, we allow for randomized policies that, for example, dispatch jobs uniformly at random.
  \item Finally, the memory state is updated according to
  \[ M(T_k)=f_3\Big(M\big(T_k^-\big),W_k,{\bf S}_k,{\bf Q}_{{\bf S}_k}\big(T_k^-\big),D_k\Big), \]
  where $f_3:\mathcal{M}_n\times\mathbb{R}_+\times\mathcal{S}_n\times \big(\cup_{i=0}^n\mathcal{Q}^i\big)\times\mathcal{N}_n\to\mathcal{M}_n$ is a measurable function defined by the policy. Note that the new memory state is obtained using the same information as for selecting the destination, plus the destination of the job, but without randomization.
\end{enumerate}

\noindent{\bf Spontaneous messages:} At the time of the $k$-th event of the spontaneous message process $R_n$, which occurs at time $T^s_k$, the $i$-th server sends a spontaneous message to the dispatcher if and only if
\[ g_1\Big({\bf Q}\big(T^s_k\big),X_k\Big)=i, \]
where $g_1:\mathcal{Q}^n\times[0,1]\to\{0\}\cup\mathcal{N}_n$ is a measurable function defined by the policy. In that case, the memory is updated to the new memory state
\[ M(T^s_k) = g_2\Big(M\big({T^s_k}^-\big), i, {\bf Q}_i\big(T^s_k\big)\Big), \]
where $g_2: \mathcal{M}_n\times \mathcal{N}_n \times \mathcal{Q}
\to\mathcal{M}_n$ is a measurable function defined by the policy, and which prescribes the server who sends a message. On the other hand, no message is sent when $g_1\big({\bf Q}({T^s_k}),X_k\big)=0$. Note that the dependence of $g_1$ on ${\bf Q}$ allows the message rate at each server to depend on the server's current workload. For example, we could let idle servers send repeated spontaneous messages (as a Poisson process) to inform the dispatcher of their idleness.\\

\noindent{\bf Service completions:} As time progresses, the remaining workload of each job that is at the head of line in a queue decreases at a constant, unit rate. When a job's workload reaches zero, the job leaves the system and every other job advances one slot. Let $\{T^d_k(i)\}_{k=1}^\infty$ be the sequence of departure times at the $i$-th server. At those times, the $i$-th server sends a message to the dispatcher if and only if
\[ h_1\Big({\bf Q}_i\big({T^d_k(i)}\big),Y_k\Big)=1, \]
where $h_1:\mathcal{Q}\times[0,1]\to\{0,1\}$ is a measurable function defined by the policy. In that case, the memory is updated to the new memory state
\[ M\Big(T^d_k(i)\Big) = h_2\Big(M\big({T^d_k(i)}^-\big), i, {\bf Q}_i\big(T^d_k(i)\big)\Big), \]
where $h_2:\mathcal{M}_n\times\mathcal{N}_n\times\mathcal{Q}\to\mathcal{M}_n$ is a measurable function defined by the policy. On the other hand, no message is sent when $h_1\big({\bf Q}_i({T^d_k(i)}),Y_k\big)=0$.

\begin{remark}
We have chosen to describe the collection of queried servers by a vector, implying an ordering of the servers in that collection. We could have described this collection as an (unordered) set.
These two options are essentially equivalent but
it turns out that the ordering provided by the vector description allows for a simpler presentation of the proof.
\end{remark}

\begin{remark}
  For any given $n$, a policy is completely determined by the spontaneous message rate $\mu$, and the functions $f_1$, $f_2$, $f_3$, $g_1$, $g_2$, $h_1$, and $h_2$. Furthermore, many policies in the literature that are described without explicit mention of memory or messages can be cast within our framework, as we will see in Section \ref{sec:review}.
\end{remark}

\begin{remark}
The memory update functions $f_3$, $g_2$, and $h_2$ do not involve randomization, even though our main result could be extended in that direction. We made this choice because none of the policies introduced in earlier literature require such randomization, and because it simplifies notation and the proofs.
\end{remark}

\begin{remark}
We only consider the memory used to store information in between arrivals or messages. Thus, when counting the memory resources used by a policy, we do not take into account information that is used in zero time (e.g., the responses from the queries at the time of an arrival) or the memory required to evaluate the various functions that describe the policy. If that additional memory were to be accounted for, then any memory constraints would be more severe, and therefore our negative result would still hold.
\end{remark}

The dispatching policies that we have introduced  obey certain constraints:
\begin{itemize}
  \item [(i)] The dispatcher can only send messages to the servers at the time of an arrival, and in a single round of communication. This eliminates the possibility of policies that sequentially poll the servers uniformly at random until they find an idle one. Indeed, it can be shown that such sequential polling policies may lead to asymptotically vanishing delays, without contradicting our lower bounds. On the other hand, in practice, queries involve some processing and travel time $\epsilon$.  Were we to consider a more realistic model with $\epsilon>0$, sequential polling would also incur positive delay.
  \item [(ii)] We assume that the dispatcher must immediately send an incoming job to a server upon arrival. This prevents the dispatcher from maintaining a centralized queue and operating the system as a G/G/$n$ queue.
\end{itemize}

We now introduce a symmetry assumption on the policies. In essence it states that at the time of a job arrival, and given the current memory state,  if certain sampling and dispatching decisions and a certain memory update are possible,  then a permuted version of these decisions and updates is also possible (and equally likely), starting with a suitably permuted memory state.

\begin{assumption}(Symmetric policies.)\label{def:symmetry}
We assume that the dispatching policy is symmetric, in the following sense. For any given permutation of the servers $\sigma$, there exists a corresponding (not necessarily unique) permutation $\sigma_M$ of the memory states $\mathcal{M}_n$ that satisfies all of the following properties.
\begin{enumerate}
  \item For every $m\in\mathcal{M}_n$ and $w \in \mathbb{R}_+$, and if $U$ is a uniform random variable on $[0,1]$, then
      \[ \sigma\Big(f_1(m,w,U)\Big) \overset{d}{=} f_1\big(\sigma_M(m),w,U\big), \]
      where $\overset{d}{=}$ stands for equality in distribution.
  \item For every $m\in\mathcal{M}_n$, $w \in \mathbb{R}_+$, ${\bf s}\in\mathcal{S}_n$, and ${\bf q}\in\mathcal{Q}^{|{\bf s}|}$, and if $V$ is a uniform random variable on $[0,1]$, then\footnote{Note that the argument on the right-hand side of the relation below involves
  ${\bf q}$ rather than a permuted version of ${\bf q}$, even though the vector ${\bf s}$ gets permuted. We are essentially comparing a situation where the dispatcher queries a vector ${\bf s}$ and receives certain numerical values ${\bf q}$ with the situation where the dispatcher queries a vector $\sigma({\bf s})$ and receives the {\bf same} numerical values ${\bf q}$.}
   \begin{align*}
   &\sigma\Big(f_2\big(m,w,{{\bf s},{\bf q}},V\big)\Big) \overset{d}{=} f_2\big(\sigma_M(m),w,{\sigma({\bf s}),{\bf q}},V\big).
  \end{align*}
  \item For every $m\in\mathcal{M}_n$, $w \in \mathbb{R}_+$, {${\bf s}\in\mathcal{S}_n$, and ${\bf q}\in\mathcal{Q}^{|{\bf s}|}$}, and $d\in\mathcal{N}_n$, we have
  \begin{align*}
   &\sigma_M\Big(f_3\big(m,w,{{\bf s},{\bf q}},d\big)\Big)= f_3\big(\sigma_M(m),w,{\sigma({\bf s}),{\bf q}},\sigma(d)\big).
  \end{align*}
\end{enumerate}
\end{assumption}

As a concrete illustration, our symmetry assumption implies the following. If a certain memory state mandates that the vector $(2,4,5)$ of servers must be sampled (with probability 1), independently from the incoming job size, then there exists some other memory state which mandates that the vector $(1,5,7)$ will be sampled, independently from the incoming job size, and the same holds for every 3-element vector with distinct entries. Since there are $n(n-1)(n-2)$ different vectors,
 there must be at least so many different memory states. This suggests that if we have too few memory states, the number of ``distinguished'' servers, i.e., servers that are treated in a special manner is severely limited. This is a key element of the proof of the delay lower bound that we present in the next subsection.

\begin{remark}
One may contemplate a different (stronger) definition of symmetry. For example, in the first part, we could have required that $\sigma\big(f_1(m,w,u)\big) = f_1\big(\sigma_M(m),w,u\big)$, for all $u\in[0,1]$. While this would lead to a simpler proof, this stronger definition would be too restrictive, as explained in Appendix \ref{app:restr}.
\end{remark}

\begin{remark}
  Note that a symmetry assumption is imposed on the memory update function $f_3$ at the time that a job is dispatched. However, we do not need to impose a similar assumption on the memory update functions $g_2$ and $h_2$ at the times that the dispatcher receives a message. Similarly, there is no symmetry assumption  on the functions $g_1$ and $h_1$ that govern the generation of server messages. In particular, we allow each server to send spontaneous messages at its own identity-dependent, and hence asymmetric, rate.
\end{remark}

\subsection{Delay lower bound for resource constrained policies} \label{sec:imposs}
Before stating the main result of this paper, we introduce formal definitions for the average message rate between the dispatcher and the servers, and for our performance metric for the delay. Furthermore, we introduce an assumption on the arrival process.

First, given a policy of the form specified in the previous subsection, we define the {\bf average message rate} between the dispatcher and the servers as
\begin{align}
 \limsup_{t\to\infty} \frac{1}{t}&\left[ \sum\limits_{k=1}^{A_n(t)} 2|{\bf S}_k| + \sum\limits_{k=1}^{R_n(t)} \mathds{1}_{\mathcal{N}_n}\Big(g_1\big({\bf Q}\big({T^s_k}\big),X_k\big)\Big) \right. \nonumber \\
  &\qquad\qquad\qquad\qquad\quad \left. + \sum\limits_{i=1}^n \sum\limits_{k:\, T^d_k(i)<t}
   \mathds{1}_{\{1\}}\Big(h_1\big({\bf Q}_i\big({T^d_k(i)}\big),Y_k\big)\Big)\right]. \label{eq:messageRate}
\end{align}

Second, we provide a formal definition of our performance metric for the delay. We assume that the process $({\bf Q}(t),M(t),Z(t))_{t\geq 0}$ is stationary, with invariant probability measure $\pi$. Since the destinations of jobs (and their queueing delays) are deterministic functions of the state and i.i.d. randomization variables, the point process of arrivals with the queueing delays as marks, is also stationary. Using this, we define the {\bf expected queueing delay in steady-state $\pi$ of a typical job}, denoted by $\mathbb{E}_{\pi}^0\left[L_0\right]$, as follows. If $L_k$ is the queueing delay of the $k$-th job under the stationary process $({\bf Q}(t),M(t),Z(t))_{t\geq 0}$, then
\begin{equation}\label{eq:delayDefinition}
 \mathbb{E}_{\pi}^0\left[L_0\right]\triangleq \mathbb{E}_{\pi}\left[ \frac{1}{\lambda n t} \sum\limits_{k=1}^{A_n(t)} L_k \right],
\end{equation}
where the right-hand side is independent from $t$ due to the stationarity of the processes involved (see \cite{PASTA}). Furthermore, if the stationary process $({\bf Q}(t),M(t),Z(t))_{t\geq 0}$ is ergodic (in the sense that every invariant set has measure either $0$ or $1$ under $\pi$), we have
\[ \mathbb{E}_{\pi}^0\left[L_0\right] = \lim_{t\to\infty} \frac{1}{A_n(t)} \sum\limits_{k=1}^{A_n(t)} L_k, \qquad a.s. \]

Finally, we introduce an assumption on the arrival process.

\begin{assumption}\label{ass:arrivals}
  Let $I_n$ be distributed as the typical inter-arrival times of the delayed renewal process $A_n(t)$. We assume that there exists a constant $\overline\epsilon>0$, independent from $n$, such that the following holds. For every $\epsilon\in(0,\overline\epsilon]$, there exists a positive constant $\delta_\epsilon$ such that
  \[ \delta_\epsilon \leq \mathbb{P}\left( I_n \leq \frac{\epsilon}{n} \right)\leq 1- \delta_\epsilon, \]
  for all $n$.
\end{assumption}

This assumption implies that arbitrarily small inter-arrival times of order $\Theta(1/n)$ occur with a probability that is bounded away from $0$, and from $1$, for all $n$. In particular, this excludes deterministic inter-arrival times, and inter-arrival times that can take values of order $o(1/n)$ with probability of order $1-o(1)$. On the other hand, if $A(t)$ is a delayed renewal process, where the typical inter-arrival times are continuous random variables with positive density around $0$, then the process $A_n(t)\triangleq A(nt)$ satisfies Assumption \ref{ass:arrivals}.\\

We are now ready to state the main result. It asserts that within the class of symmetric policies that we consider, and under some upper bounds on the memory size (logarithmic) and the message rate (linear), the expected queueing delay in steady-state of a typical job is bounded below by a positive constant.

\begin{theorem}[Positive delay for resource constrained policies]
\label{thm:impossibility}
For any constants $\lambda\in(0,1)$, $c$, $\alpha >0$, and for every arrival process that satisfies Assumption \ref{ass:arrivals}, there exists a constant $\zeta(\lambda,c,\alpha)>0$ with the following property. For any fixed $n$, consider a symmetric memory-based dispatching policy, i.e., that satisfies Assumption \ref{def:symmetry}, with at most $c\log_2(n)$ bits of memory, with an average message rate (cf. Equation \eqref{eq:messageRate}) upper bounded by $\alpha n$ in expectation, and under which the process $({\bf Q}(t),M(t),Z(t))_{t\geq 0}$ admits at least one invariant probability measure $\pi_n$. Then, for all $n$ large enough, we have
\[ \mathbb{E}_{\pi_n}^0\left[L_0\right] \geq \zeta(\lambda,c,\alpha), \]
where $\mathbb{E}_{\pi_n}^0\left[L_0\right]$ is the expected queueing delay in steady-state $\pi_n$ of a typical job.
\end{theorem}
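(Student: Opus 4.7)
The plan is to turn the remark after Assumption~\ref{def:symmetry} into a quantitative pigeonhole bound, combine it with the linear message budget to bound the ``visible'' set of servers at each arrival, and then use Assumption~\ref{ass:arrivals} together with stationary work conservation to force a positive fraction of arrivals to be routed to non-empty queues.

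\textbf{Step 1: structural bound from symmetry.} For each $m\in\mathcal M_n$ I would define a \emph{distinguished set} $D(m)\subset\mathcal N_n$ as the minimal subset such that the laws of $f_1(m,\cdot,\cdot)$, $f_2(m,\cdot,\cdot,\cdot,\cdot)$, and $f_3(m,\cdot,\cdot,\cdot,\cdot)$ are invariant under every server permutation $\sigma$ that fixes $D(m)$, with the induced memory permutation $\sigma_M$ also fixing $m$. Applying Assumption~\ref{def:symmetry} and an orbit-stabilizer argument on the action of $P_n$ via $\sigma_M$ on $\mathcal M_n$, I would show that if $|D(m)|=k$, then the orbit of $m$ contains at least roughly $\binom{n}{k}$ distinct memory states. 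Since $|\mathcal M_n|\le n^c$, this forces $|D(m)|\le \kappa(c)$ for all $m$ and $n$ large enough. This formalizes the remark following Assumption~\ref{def:symmetry}.

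\textbf{Step 2: message budget and burstiness force collisions.} The message bound gives $\mathbb E[|{\bf S}_k|]\le \alpha/(2\lambda)$ per arrival, so a Markov-type argument produces a constant $K(\alpha,\lambda)$ and an event of probability at least $1/2$ on which $|{\bf S}_k|\le K$. Together with Step~1, the dispatcher's ``informed set'' at the arrival time, $D(M(T_k^-))\cup{\bf S}_k^{set}$ augmented by servers touched by recent server-originated messages, has size $O(1)$ with positive probability, and the symmetry assumption makes the dispatcher's behavior exchangeable on the complementary servers. Stationary work conservation, $\sum_i\mathbb P_{\pi_n}({\bf Q}_i\ne 0)=\lambda n$, then implies that at least a fraction $\lambda-o(1)$ of the non-distinguished servers are busy. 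Finally, Assumption~\ref{ass:arrivals} supplies bursts of two arrivals within time $\epsilon/n$ with probability at least $\delta_\epsilon$; during such a window the expected number of departures or spontaneous messages is $O(\epsilon)$, so the two arrivals effectively see the same memory state and the same informed set. Combining these, a positive fraction of arrivals must be dispatched to a server that is exchangeable with a uniformly chosen busy server, and the expected residual workload at that destination is bounded below by a positive constant (using that service times have mean $1$ and the aggregate busy workload is $\Omega(n)$ by a Little's-law calculation). This yields $\mathbb E_{\pi_n}^0[L_0]\ge \zeta(\lambda,c,\alpha)>0$.

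\textbf{Main obstacle.} The most delicate ingredient is Step~1. Because the symmetry in Assumption~\ref{def:symmetry} is a distributional statement involving the randomization variables $U_k,V_k$, one cannot directly identify memory states with vectors of servers, and the memory partner $\sigma_M$ is not asserted to be unique. My plan is to decompose each $f_1(m,w,U)$ into its support and its conditional distribution, condition on positive-probability events of the form $\{f_1(m,w,U)={\bf s}\}$ to obtain explicit server tuples, and then run the orbit-stabilizer count on an auxiliary ``type class'' of memory states obtained by quotienting out the ambiguity in the choice of $\sigma_M$. A secondary difficulty in Step~2 is that server-to-dispatcher messages are not required to obey any symmetry condition, so exchangeability of queues outside the informed set has to be recovered via a coupling argument rather than directly from the symmetry assumption. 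Once Step~1 is established, Steps~2 and~3 should follow from relatively standard stationarity and conservation estimates.
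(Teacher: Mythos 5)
There is a genuine gap in Step~2, and it is not a technicality: using bursts of \emph{two} arrivals cannot work. Your Step~1 (which is morally the same counting argument as Propositions~\ref{prop:symSampling} and \ref{prop:symDispatching}) yields a distinguished set of size up to some $\kappa(c)$, and $\kappa(c)$ is generally $\ge 2$. A dispatcher with $c\log_2 n$ bits can easily remember the identities of $c$ idle servers; if only two jobs arrive in the burst, both can be sent to remembered idle servers and suffer zero delay. The constant $c$ is part of the theorem statement precisely because the memory can ``protect'' up to $c$ arrivals. The paper's proof requires $c+1$ arrivals within the window $(0,\gamma/n]$ (event $\mathcal{A}_a$), and then a pigeonhole argument in Section~\ref{s:compl}: the $c+1$ destinations all lie in $B\cup\overline R$, but $|\overline R\setminus B|\le c$, so either some destination is a busy server or two destinations coincide — in both cases delay at least $\gamma$.

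There is a second, subtler gap that your sketch does not engage with: even if you did look at $c+1$ arrivals, the dispatcher's useful knowledge is \emph{not} simply $D(M(T_k^-))\cup{\bf S}_k^{set}$ at the last arrival. Over the window the dispatcher samples servers and updates its memory, so the accumulated set of servers it has ever treated asymmetrically is $\overline R=\bigcup_k(R_k\cup R_k')$, which can have size up to roughly $c\sum_k|{\bf S}_k|$, far exceeding $c+1$. The entire Section~\ref{sec:boundUsefulServers} (Proposition~\ref{prop:cardinalityBound}, proved via the sample-path permutation argument of Lemma~\ref{lem:finalLemma}) is devoted to showing that \emph{on the bad event} — i.e., when every freshly sampled server happens to be busy — the \emph{useful} part $\overline R\setminus B$ stays of size $\le c$. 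Your proposal replaces this with ``the two arrivals effectively see the same memory state and the same informed set,'' which is both quantitatively wrong (see above) and silent on why new samples don't reveal new idle servers. You would need to construct the nested bad events $\mathcal H_k^\pm$ (with all sampled servers forced into $R_{k,i}\cup B$), prove they have uniformly positive probability via the $\gamma/2$ lower bound of Lemma~\ref{l:tedious}, and then run the memory-permutation accounting of Lemma~\ref{lem:finalLemma} to recover the $\le c$ bound. Your Step~1 and the stationarity/work-conservation estimates (Little's law for $\mathcal A_b$, the mean-rate bound for $\mathcal A_s$) are on the right track and match the paper's Lemma~\ref{lem:base_case}, but the combinatorial core is missing.
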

The proof is given in Section \ref{sec:proof_imposs}.

\section{Dispatching policies in the literature}\label{sec:review}

In this section we put our results in perspective by showing that various dispatching policies considered earlier in the literature are special cases of the class of symmetric dispatching policies described above. Most policies have only been studied for the case of Poisson arrivals and exponential service times, so this review is restricted to that case unless stated otherwise.

\subsection{Open-loop policies}

\subsubsection{Random routing} The simplest policy is to dispatch each arriving job to a random queue, with each queue being equally likely to be selected. In this case, the system behaves as $n$  independent parallel M/M/1 queues. This policy needs no messages or memory, and has a positive queueing delay independent of $n$.

\subsubsection{Round Robin (RR)} When the dispatcher has no access to the workload of incoming jobs and no messages are allowed, it is optimal to dispatch arriving jobs to the queues in a round-robin fashion \cite{roundRobin}. This policy does not require messages but needs $\lceil\log_2(n)\rceil$ bits of memory to store the ID of the next queue to receive a job. In the limit, each queue behaves like a D/M/1 queue (see \cite{roundRobin}). While random routing is a symmetric policy, Round Robin is not. To see this, note that a memory state $i$ must be followed by state $i+1$, and such a transition is not permutation-invariant; in particular, the memory update function $f_3$ does not satisfy the symmetry assumption. Round Robin can be made symmetric by using an additional $n\lceil\log_2(n)\rceil$ bits of memory to specify the order with which the different servers are selected. But in any case, this policy also has a positive queueing delay, that does not vanish as $n$ increases.

\subsection{Policies based on queue lengths}

\subsubsection{Join a shortest queue (SQ)} If we wish to minimize the queueing delay and have access to the queue lengths but not to the job sizes, an optimal policy is to have each incoming job join a shortest queue, breaking any ties uniformly at random  \cite{winston}. When $n$ goes to infinity, the queueing delay vanishes, but this policy requires a message rate of $2\lambda n^2$ ($n$ queries and $n$ responses for each arrival), and no memory. This policy is symmetric and achieves vanishing delay, but uses a superlinear number of messages.

\subsubsection{Join a shortest of $d$ random queues (SQ($d$))}
In order to sharply decrease the number of messages sent, Mitzenmacher \cite{mitzenmacher} and Vvedenskaya et al. \cite{vvedenskaya} introduced the power-of-$d$-choices policy. When there is an arrival, $d$ servers are chosen uniformly at random, and the job is sent to a shortest queue among those $d$ servers. This policy fits our framework, and in particular is symmetric; it uses $2\lambda d n$ messages per unit of time, and zero memory. This policy was also analyzed in the case of heavy-tailed service times by Bramson et al. \cite{bramson13}, yielding similar results. In any case, this policy has positive delay, which is consistent with Theorem \ref{thm:impossibility}.

\subsubsection{Join a shortest of $d_n$ random queues (SQ($d_n$))}
More recently, Mukherjee et al. \cite{BorstPowerOfd} analyzed a variation of the SQ($d$) policy, which lets $d$ be a function of the system size $n$. This policy is symmetric, uses $2\lambda d_n n$ messages per unit of time and zero memory, and has zero delay as long as $d_n\to\infty$, which is consistent with Theorem \ref{thm:impossibility}.

\subsubsection{Join a shortest of $d$ queues, with memory (SQ($d$,$b$))}
Another improvement over the power-of-$d$-choices, proposed by Mitzenmacher et al. in \cite{shahFiniteLevels}, is obtained by using extra memory to store the IDs of the $b$ (with $b\leq d$) least loaded queues known at the time of the previous arrival. When a new job arrives, $d$ queues are sampled uniformly at random and the job is sent to a least loaded queue among the $d$ sampled and the $b$ stored queues. This policy is symmetric, needs $2\lambda d n$ messages per unit of time and $\Omega$($b\log_2 n$) bits of memory, and has positive delay, consistent with Theorem \ref{thm:impossibility}.

\subsubsection{SQ($d$) for divisible jobs}
Recently, Ying et al. \cite{srikant15} considered the case of jobs of size $m_n$ (with $m_n\in\omega(1)$ and $m_n/n\to 0$) arriving as a Poisson process of rate $n\lambda/m_n$, where each job can be divided into $m_n$ tasks with mean size $1$. Then, the dispatcher samples $dm_n$ queues and does a water-filling of those queues with the $m_n$ tasks. In this case, the number of messages sent per unit of time is $2\lambda d n$ and no memory is used.
Even though this was not mentioned in \cite{srikant15}, this policy can be shown to drive the queueing delay to $0$ if $d\geq 1/(1-\lambda)$. However, this model does not fall into our framework because it involves divisible jobs.

\subsection{Policies based on remaining workload}

\subsubsection{Join a least loaded queue (LL)}
An appealing policy is the one that sends incoming jobs to a queue with the least remaining workload, in which case the whole system behaves as an M/M/$n$ queue. This policy is symmetric and achieves a vanishing delay as $n\to\infty$, but it has the same quadratic messaging requirements as SQ.

\subsubsection{Join a least loaded of $d$ queues (LL($d$))}
A counterpart of SQ($d$) is LL($d$), in which the dispatcher upon arrival chooses $d$ queues uniformly at random and sends the job to one of those queues with the least remaining workload, breaking any ties uniformly at random. This setting was studied in \cite{LL}, and it does not result in asymptotically vanishing delay, consistent with Theorem \ref{thm:impossibility}.

\subsection{Policies based on job size}
The previous policies dispatched the incoming jobs based on information about the state of the queues, obtained by dynamically exchanging messages with the servers. Such information could include the remaining workload at the different queues. On the other hand, if the dispatcher only knows the size of an incoming job (which might be difficult in practice \cite{predictions}), it could use a static and memoryless policy that selects a target server based solely on the job size. Harchol-Balter et al. \cite{harchol-balter} showed that delay is minimized over all such static policies by a non-symmetric policy that partitions the set of possible job sizes into consecutive intervals and assigns each interval to a different server. This is especially effective when the jobs have highly variable sizes (e.g., heavy-tailed), yet the resulting delay can be no better than that of an M/D/1 queue, and does not vanish as $n\to\infty$. This scheme does not require any message exchanges, and could be made symmetric by using the memory to store a list of the $n$ intervals of job sizes corresponding to each of the $n$ servers.

\subsection{Pull-based load balancing}
In order to reduce the message rate, Badonnel and Burgess \cite{badonnelBurgess}, Lu et al. \cite{joinIdleQueue}, and Stolyar \cite{stolyar14} propose a scheme where messages are sent from a server to the dispatcher whenever the server becomes idle, so that the dispatcher can keep track of the set of idle servers in real time. Then, an arriving job is to be sent to an empty queue (if there is one) or to a queue chosen uniformly at random (if all queues are non-empty). This policy requires at most $\lambda n$ messages per unit of time and exactly $n$ bits of memory (one bit for each queue, indicating whether it is empty or not). Stolyar \cite{stolyar14} has shown that when $n$ goes to infinity, the average delay vanishes. This policy is symmetric. It has a vanishing delay and a linear message rate, but uses superlogarithmic memory, consistent with Theorem \ref{thm:impossibility}.

\subsection{Memory, messages, and queueing delay}

We now summarize the resource requirements (memory and message rate) and the asymptotic delay of the policies reviewed in this section that fall within our framework.

\begin{center}
    \begin{tabular}{ | l | c | c | c |}
    \hline
    Policy & Memory (bits) & Message rate & Limiting delay \\ \hline\hline
    Random & $0$ & $0$ & $>0$ \\ \hline
    RR \cite{roundRobin} & $\log_2 n$ & $0$ & $>0$ \\ \hline
    SQ \cite{winston}& $0$ & $2 \lambda n^2$ & $0$ \\ \hline
    SQ($d$) \cite{mitzenmacher} & $0$ & $2d\lambda n$ & $>0$ \\ \hline
    SQ($d_n$) \cite{BorstPowerOfd} & $0$ & $\omega(n)$ & $0$ \\ \hline
    SQ($d,b$) \cite{shahFiniteLevels} & $b\log_2(n)$ & $2d\lambda n$ & $>0$ \\ \hline
    LL & $0$ & $2 \lambda n^2$ & $0$ \\ \hline
    LL($d$) \cite{LL} & $0$ & $2d\lambda n$ & $>0$ \\ \hline
    Pull-based \cite{stolyar14} & $n$ & $\lambda n$ & $0$ \\ \hline
    \end{tabular}
\end{center}

Note that any one of the above listed policies that achieves vanishing queueing delay falls into one (or both) of the following two categories:
\begin{itemize}
  \item [a)] Those requiring $\omega(n)$ message rate, namely, SQ, SQ($d_n$), and LL.
  \item [b)] Those requiring $\omega(\log_2 n)$ bits of memory (Pull-based).
\end{itemize}
Our main result effectively establishes that this reflects a fundamental limitation of symmetric policies.

\section{Proof of main result}\label{sec:proof_imposs}
Let us fix some $n$. In the sequel, we will assume that $n$ is large enough whenever needed for certain inequalities to hold. We fix a memory-based policy that satisfies Assumption \ref{def:symmetry} (symmetry), with at most $n^c$ memory states, and which results in the process  $({\bf Q}(t),M(t),Z(t))_{t\geq 0}$ having at least one invariant probability measure. Let us fix such an invariant probability measure $\pi_n$. We consider the process in steady-state; that is, we assume that $({\bf Q}(0),M(0),Z(0))$ is distributed according to $\pi_n$. Accordingly, probabilities $\mathbb{P}(\cdot)$ and expectations $\mathbb{E}[\,\cdot\,]$ encountered in the sequel will always refer to the process in steady-state.

The high-level outline of the proof is as follows. In Section \ref{sec:limitation} we show that under our symmetry assumption, the dispatcher can give special treatment to at most $c$ servers, which we call \emph{distinguished servers}. The treatment of all other servers, is symmetric, in some appropriate sense.

In Section \ref{subsec:badEvent} we consider a sequence of bad events under which, over a certain time interval, there are $c+1$ consecutive arrivals,  no service completions or messages from the servers, and all sampled servers are ``busy'' with a substantial workload. Then, in Section \ref{subsec:lowerBound}, we show that this sequence of bad events has non-negligible probability.

In Section \ref{sec:boundUsefulServers}, we develop some further consequences of the symmetry assumption, which we use to constrain the information available to the dispatcher at the time of the $(c+1)$st arrival. Loosely speaking, the idea is that during the interval of interest, the server only has information on $c$ distinguished servers together with (useless) information on some busy servers. This in turn implies (Section \ref{s:compl}) that at least one of the first $c+1$ arrivals must be dispatched to a server on which no useful information is available, and which therefore has a non-negligible probability of inducing a non-negligible delay, thus completing the proof.

\subsection{Local limitations of finite memory} \label{sec:limitation}
We consider the (typical) case where a relatively small number of servers ($\sqrt{n}$ or less) are sampled. We will use the symmetry assumption to show that except for a small set of distinguished servers, of size at most $c$, all other servers must be treated as indistinguishable.

\begin{proposition}\label{prop:symSampling}
Let $U$ be a uniform random variable over $[0,1]$. For all $n$ large enough, for every memory state $m\in\mathcal{M}_n$ and  every possible job size $w \in\mathbb{R}_+$, the following holds. Consider any vector of servers ${\bf s}\in\mathcal{S}_n$ (and its associated set of servers ${\bf s}^{set}$) with $|{\bf s}|\leq \sqrt{n}$, and any integer $\ell$ with $|{\bf s}|+1\leq \ell\leq n$.
Consider the event $B(m,w;{\bf s},\ell)$ that exactly $\ell$ servers are sampled and that the first $|{\bf s}|$ of them  are the same as the vector ${\bf s}$, i.e.,
\[ B(m,w;{\bf s},\ell) = \big\{ |f_1(m,w,U)|=\ell \big\} \cap \bigcap_{i=1}^{|{\bf s}|}  \big\{ f_1(m,w,U)_i={\bf s}_i \big\}, \]
and assume that the conditional probability measure
\[ \mathbb{P}\left(\,\, \cdot \,\, \big| \ B(m,w;{\bf s},\ell)\right) \]
is well-defined. Then, there exists a unique set $R(m,w,{\bf s},\ell)\subset \mathcal{N}_n \backslash {\bf s}^{set}$ of minimal cardinality such that
\begin{equation}\label{eq:ss}
 \mathbb{P}\left(f_1(m,w,U)_{|{\bf s}|+1} = j \ \big| \ B(m,w;{\bf s},\ell)\right)
 \end{equation}
 is the same for all $j \notin R(m,w,{\bf s},\ell)\cup {\bf s}^{set}$. Furthermore, $|R(m,w,{\bf s},\ell)|\leq c$.
\end{proposition}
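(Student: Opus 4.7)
The plan is an orbit--stabilizer argument, with the group action supplied by Assumption~\ref{def:symmetry}. Fix $m$, $w$, ${\bf s}$, $\ell$ as in the statement, and write $p_j := \mathbb{P}\bigl(f_1(m,w,U)_{|{\bf s}|+1}=j \,\big|\, B(m,w;{\bf s},\ell)\bigr)$ for $j \notin {\bf s}^{set}$; more generally, set $q_j(m') := \mathbb{P}\bigl(f_1(m',w,U)_{|{\bf s}|+1}=j \,\big|\, B(m',w;{\bf s},\ell)\bigr)$ whenever the conditioning event has positive probability. Let $T = \{\sigma \in P_n : \sigma({\bf s}_i) = {\bf s}_i \text{ for all } i\}$ be the subgroup of permutations that fix the vector ${\bf s}$ pointwise; $T$ acts as the full symmetric group on $\mathcal{N}_n \setminus {\bf s}^{set}$. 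The first step is to prove that, for every $\sigma \in T$,
\begin{equation*}
 p_{\sigma^{-1}(j)} \;=\; q_j\bigl(\sigma_M(m)\bigr), \qquad j \notin {\bf s}^{set}.
\end{equation*}
Because $\sigma$ fixes ${\bf s}$ pointwise, both the event $B(m,w;{\bf s},\ell)$ and the refinement $\{{\bf v} \in \mathcal{S}_n : |{\bf v}|=\ell,\ {\bf v}_i = {\bf s}_i\ \forall i \leq |{\bf s}|,\ {\bf v}_{|{\bf s}|+1} = j\}$ are $\sigma$-invariant as subsets of $\mathcal{S}_n$. Applying part 1 of Assumption~\ref{def:symmetry} first to $B(m,w;{\bf s},\ell)$ itself (to deduce $\mathbb{P}(B(\sigma_M(m),w;{\bf s},\ell)) = \mathbb{P}(B(m,w;{\bf s},\ell)) > 0$) and then to each refinement produces the display after dividing by the probability of the conditioning event.

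With this identity in hand, let $T$ act on distributions over $\mathcal{N}_n \setminus {\bf s}^{set}$ by $(\sigma \cdot q)_j := q_{\sigma^{-1}(j)}$. The identity just proved reads $\sigma \cdot p = q(\sigma_M(m))$, so the $T$-orbit of $p$ is contained in $\{q(m') : m' \in \mathcal{M}_n\}$, which has at most $|\mathcal{M}_n| \leq n^c$ elements. Orbit--stabilizer then yields $[T : H] \leq n^c$, where $H := \{\sigma \in T : \sigma \cdot p = p\}$. Letting $C_1, \ldots, C_k$ be the level sets of $j \mapsto p_j$ on $\mathcal{N}_n \setminus {\bf s}^{set}$, with sizes $n_1 \geq \cdots \geq n_k$, the fact that $T$ acts as the full symmetric group on $\mathcal{N}_n \setminus {\bf s}^{set}$ makes $H$ exactly $\prod_i \mathrm{Sym}(C_i)$, and the index $[T : H]$ equals the multinomial coefficient $\binom{n - |{\bf s}|}{n_1, \ldots, n_k}$.

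Set $r := (n-|{\bf s}|) - n_1 = \sum_{i \geq 2} n_i$. The multinomial coefficient is bounded below by $\binom{n - |{\bf s}|}{r} \geq \binom{n - \sqrt{n}}{r}$, using $|{\bf s}| \leq \sqrt{n}$. If $r \geq c+1$, then for all $n$ large enough $\binom{n - \sqrt{n}}{c+1} > n^c$, contradicting the orbit bound; hence $r \leq c$. Taking $R := \mathcal{N}_n \setminus ({\bf s}^{set} \cup C_1) = \bigcup_{i \geq 2} C_i$ yields a set of size at most $c$ on whose complement in $\mathcal{N}_n \setminus {\bf s}^{set}$ the values $p_j$ are constant. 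For uniqueness, each $n_i$ with $i \geq 2$ satisfies $n_i \leq r \leq c$ while $n_1 \geq n - \sqrt{n} - c$, so for $n$ large enough $n_1$ strictly exceeds every other $n_i$, making $C_1$ uniquely determined. Any valid $R'$ must leave $p_j$ constant on $(\mathcal{N}_n \setminus {\bf s}^{set}) \setminus R'$, hence contained in a single level set, and minimality of $|R'|$ forces that level set to be the largest one, $C_1$; thus $R' = R$.

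The principal difficulty is the first step: translating the equality in distribution of Assumption~\ref{def:symmetry} into a pointwise identity on conditional marginals, given that the memory permutation $\sigma_M$ depends on $\sigma$ and is not required to act trivially on normalizing probabilities. What rescues the bookkeeping is precisely that for $\sigma \in T$ the conditioning event $B(m,w;{\bf s},\ell)$ is $\sigma$-invariant, which simultaneously matches the numerators and the denominators on the two sides. Once that identity is in place, the rest of the proof is pure group theory plus a standard estimate on multinomial coefficients.
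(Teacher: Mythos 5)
Your orbit--stabilizer framing is essentially a repackaging of the paper's argument: the paper counts the number of distinct permutations ${\bf v}_{\sigma^{-1}}$ of the probability vector, which is exactly your index $[T:H]$, and the bound $\leq n^c$ via the memory size is identical in both. The key identity $p_{\sigma^{-1}(j)} = q_j(\sigma_M(m))$ is correct and matches the paper's chain of equalities, although your aside that the refinement $\{{\bf v}_{|{\bf s}|+1}=j\}$ is ``$\sigma$-invariant'' is imprecise (it is mapped to $\{{\bf v}_{|{\bf s}|+1}=\sigma(j)\}$, not fixed); this does not affect the conclusion.

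The genuine gap is in the combinatorial step. From the orbit bound you obtain $\binom{n-|{\bf s}|}{n_1,\dots,n_k} \leq n^c$, lower-bound the multinomial coefficient by $\binom{n-|{\bf s}|}{r}$ with $r := (n-|{\bf s}|)-n_1$, and conclude that $r \geq c+1$ forces $\binom{n-\sqrt{n}}{r} \geq \binom{n-\sqrt{n}}{c+1} > n^c$. But that monotonicity step fails once $r$ exceeds $(n-\sqrt{n})/2$: the binomial coefficient $\binom{N}{r}$ decreases again past $N/2$, and in fact if $n_1 \leq c$ (so $r \geq n-|{\bf s}|-c$) one has $\binom{n-|{\bf s}|}{r}=\binom{n-|{\bf s}|}{n_1}\leq n^c$, so your chain produces no contradiction even though $r$ is enormous. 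In other words, the lower bound $\binom{n-|{\bf s}|}{r}$ is too weak exactly when \emph{all} level sets are small, which is the regime you most need to exclude. The paper handles this by first invoking Lemma~\ref{lem:chooseBound} to reduce to the dichotomy ``$n_1 \leq c$ or $n_1 \geq n-|{\bf s}|-c$'', and then ruling out the first branch with a separate counting argument: if all level sets have size at most $c$, there are at least $\lceil (n-|{\bf s}|)/c\rceil$ of them, and the multinomial coefficient (your index $[T:H]$) is then at least $\lceil (n-|{\bf s}|)/c\rceil!$, which dwarfs $n^c$. You need some version of this second argument; without it the claim $r \leq c$ does not follow, and consequently neither does the uniqueness step, which relies on $n_1 > n/2$.
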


\begin{remark}\label{rem}
With some notational abuse, the measure $\mathbb{P}$ in Proposition \ref{prop:symSampling} need not correspond to the measure $\mathbb{P}$ that describes the process. We are simply considering probabilities associated with a deterministic function of the uniform random variable $U$.
\end{remark}

\begin{proof}
Throughout the proof, we fix a particular memory state $m$, job size $w$, {vector} of servers ${\bf s}$ with $|{\bf s}|\leq\sqrt{n}$, and an integer $\ell$ in the range $|{\bf s}|+1\leq \ell\leq n$. To simplify notation, we will suppress the dependence on $w$.

Consider the random vector ${\bf S}(m)\triangleq f_1(m,U)$. Let $\bf v$ be the vector whose components are indexed by $j$ ranging in the set $\big({\bf s}^{set}\big)^c=\mathcal{N}_n\backslash {\bf s}^{set}$, and defined for any such $j$, by
\[ {\bf v}_j= \mathbb{P}\left( {{\bf S}(m)_{|{\bf s}|+1} = j} \ \big| \
B(m; {\bf s},\ell)\right).\]
We need to show that for $j$ outside a ``small'' set, all of the components of ${\bf v}$ are equal.
Let $z_1,\ldots,z_d$ be the distinct values of ${\bf v}_j$, as $j$ ranges over $\big({\bf s}^{set}\big)^c$, and let $A_\alpha =\{j \in \big({\bf s}^{set}\big)^c \mid {\bf v}_j=z_\alpha\}$. The sequence of sets $(A_1,\ldots,A_d)$ provides a partition of $\big({\bf s}^{set}\big)^c$ into equivalence classes, with ${\bf v}_j={\bf v}_{j'}=z_{\alpha}$, for all $j,j'$ in the $\alpha$-th equivalence class $A_\alpha$. Let $k_1,\ldots,k_d$ be the cardinalities of the equivalence classes $A_1,\ldots,A_d$. Without the loss of generality, assume that $k_d$ is a largest such cardinality. We define
\[ R=\Big\{j\in \big({\bf s}^{set}\big)^c \mid {\bf v}_j \neq {\bf v}_d\Big\} = A_1\cup\cdots\cup A_{d-1}, \]
so that $R^c\cap \big({\bf s}^{set}\big)^c=A_d$.
For every $j,j'\in A_d$, we have  ${\bf v}_j={\bf v}_{j'}={\bf v}_d$, and therefore the condition \eqref{eq:ss} is satisfied by $R$. Note that by choosing ${\bf v}_d$ to be the most common value, we are making the cardinality of the set $R^c\cap \big({\bf s}^{set}\big)^c=\{j\notin {\bf s}^{set}\mid {\bf v}_j={\bf v}_d\}$ as large as possible, from which it follows that the set $R\cap \big({\bf s}^{set}\big)^c$ is as small as possible, and therefore $R$, as defined, is indeed a minimal cardinality subset of $\big({\bf s}^{set}\big)^c$ that satisfies \eqref{eq:ss}.

We now establish the desired upper bound on the cardinality of $R$. Let $\Sigma$ be the set of permutations that fix the set ${\bf s}^{set}$. Consider an arbitrary permutation $\sigma\in\Sigma$ and let $\sigma_M$ be a corresponding permutation of the memory states, as defined by Assumption \ref{def:symmetry}. We let ${\bf v}_{\sigma^{-1}}$ be the vector with components $({\bf v}_{\sigma^{-1}})_j={\bf v}_{\sigma^{-1}(j)}$, for $j \notin {\bf s}^{set}$. Note that as we vary $\sigma$ over the set $\Sigma$, ${\bf v}_{\sigma^{-1}}$ ranges over  all possible permutations of the vector ${\bf v}$. We also have, for $j \notin {\bf s}^{set}$,
\begin{align*}
({\bf v}_{\sigma^{-1}})_j & ={\bf v}_{\sigma^{-1}(j)} \\
&=  \mathbb{P}\Big( {\bf S}(m)_{|{\bf s}|+1} = \sigma^{-1}(j) \ \Big| \ B(m; {\bf s}, \ell)  \Big) \\
&= \mathbb{P}\left( {\bf S}(m)_{|{\bf s}|+1} = \sigma^{-1}(j) \ \left| \ \big\{ |{\bf S}(m)|=\ell \big\} \cap \bigcap_{i=1}^{|{\bf s}|} \big\{ {\bf S}(m)_i={\bf s}_i \big\} \right. \right) \\
&= \mathbb{P}\left( \sigma\big({\bf S}(m)_{|{\bf s}|+1}\big) = j \ \left| \ \big\{ |{\bf S}(m)|=\ell \big\} \cap \bigcap_{i=1}^{|{\bf s}|} \big\{ {\bf S}(m)_i={\bf s}_i \big\} \right. \right)
\end{align*}
\begin{align*}
&= \mathbb{P}\left( \sigma\big({\bf S}(m)_{|{\bf s}|+1}\big) = j \ \left| \ \big\{ |\sigma({\bf S}(m))|=\ell \big\} \cap \bigcap_{i=1}^{|{\bf s}|} \big\{ \sigma({\bf S}(m)_i)=\sigma({\bf s}_i) \big\} \right. \right)\\
&= \mathbb{P}\left( \sigma\big({\bf S}(m)_{|{\bf s}|+1}\big) = j \ \left| \ \big\{ |\sigma({\bf S}(m))|=\ell \big\} \cap \bigcap_{i=1}^{|{\bf s}|} \big\{ \sigma({\bf S}(m)_i)={\bf s}_i \big\} \right. \right) \\
&= \mathbb{P}\left( {\bf S}\big(\sigma_M(m)\big)_{|{\bf s}|+1} = j \ \left| \ \big\{ |{\bf S}(\sigma_M(m))|=\ell \big\} \cap \bigcap_{i=1}^{|{\bf s}|} \big\{ {\bf S}(\sigma_M(m))_i={\bf s}_i \big\} \right. \right).
\end{align*}
Note that in the above expressions, the only random variables are ${\bf S}(m)$ and ${\bf S}\big(\sigma_M(m)\big)$, while ${\bf s}$ is a fixed vector. The next to last equality above holds because $\sigma$ fixes the elements in the vector $\bf s$; the last equality follows because the random variables $\sigma\big({\bf S}(m)\big)$ and ${\bf S}\big(\sigma_M(m)\big)$ are identically distributed, according to Part 1 of the symmetry Assumption \ref{def:symmetry}. The equality that was established above implies that $\sigma_M(m)$ completely determines the vector ${\bf v}_{\sigma^{-1}}$. As $\sigma\in\Sigma$ changes, $\sigma_M(m)$ can take at most $n^c$ distinct values, due to the assumed bound on the memory size, and this leads to a bound on the number of possible permutations of the vector $\bf v$:
\begin{equation}\label{eq:extra1}
  \big|\{{\bf v}_{\sigma^{-1}}:\sigma \in\Sigma\}\big|\leq n^c. \nonumber
\end{equation}

We now argue that since $\bf v$ has relatively few distinct permutations, most of its entries ${\bf v}_i$ must be equal. Recall the
partition of the set $\big({\bf s}^{set}\big)^c$ of indices into equivalence classes, of sizes $k_1,\ldots,k_d$, with $k_d$ being the largest cardinality. Note that there is a one-to-one correspondence between distinct permutations ${\bf v}_{\sigma^{-1}}$ of the vector $\bf v$ and distinct partitions of $\big({\bf s}^{set}\big)^c$ into a sequence of subsets of cardinalities $k_1,\ldots,k_d$, with the value $z_\alpha$ being taken on the $\alpha$-th subset. It follows that the number of different partitions of $S^c$ into sets with the given cardinalities, which is given by the multinomial coefficient, satisfies
$${n-|{\bf s}| \choose k_1!\ k_2! \ \cdots k_d!} = \big|\{{\bf v}_{\sigma^{-1}}:\sigma \in\Sigma\}\big|\leq n^c. $$
The number of choices of a $k_d$-element subset is no larger than the number of partitions. Therefore,
$${n-|{\bf s}| \choose k_d} \leq n^c.$$
An elementary calculation (cf.~Lemma \ref{lem:chooseBound}) implies that when $n$ is large enough, we must have either (i) $k_d\geq n-|{\bf s}|-c$ or (ii) $k_d \leq c$. We argue that the second possibility cannot occur. Indeed, if $k_d\leq c$, and since $k_d$ is the largest cardinality, it follows that $k_\alpha\leq c$ for every $\alpha$. Since $k_1+\cdots+k_d =n-|{\bf s}|$, we obtain that the number of classes, $d$, is at least $\lceil (n-|{\bf s}| )/c \rceil$. When dealing with $d$ different classes, the number of possible partitions is at least $d!$; this can be seen by focusing on the least-indexed entry in each of the $d$ classes and noting that these $d$ entries may appear in an arbitrary order.
Since $|{\bf s}|\leq \sqrt{n}$, we have $n-|{\bf s}|\geq n/2$, and putting everything together, we obtain
$$ \lceil (n/2c) \rceil ! \leq \left\lceil \frac{n-|{\bf s}|}{c} \right\rceil !
 \leq n^c.$$
This is clearly impossible when $n$ is large enough, and case (ii) can therefore be eliminated. We conclude that $|A_d|=k_d\geq n-|{\bf s}|-c$. Since $|A_1\cup\cdots \cup A_{d}|=|\big({\bf s}^{set}\big)^c|=n-|{\bf s}|$,
it follows that $|R|=|A_1\cup\cdots\cup A_{d-1}|\leq c$, which is the desired cardinality bound on $R$.

It should be apparent that any minimal cardinality set $R$ that satisfies \eqref{eq:ss} must be constructed exactly as our set $A_d$. Thus, non-uniqueness of the set $R$ with the desired properties will arise if and only if there is another subset  $A_\alpha$, with $\alpha\neq d$, with the same maximal cardinality $k_d$. On the other hand, since $|{\bf s}|\leq \sqrt{n}$, we have $k_d \geq n-|{\bf s}|-c >n/2$, when $n$ is large enough. But having two disjoint subsets, $A_{d}$ and $A_{\alpha}$, each of cardinality larger than $n/2$ is impossible, which proves uniqueness.
\end{proof}

Using a similar argument, we can also show that the distribution of the destination of the incoming job is uniform (or zero) outside the set of sampled servers and a set of at most $c$ distinguished servers.

\begin{proposition}\label{prop:symDispatching}
Let $V$ be a uniform random variable over $[0,1]$. For all $n$ large enough, for every memory state $m\in\mathcal{M}_n$, every vector of indices ${\bf s}\in\mathcal{S}_n$ with $|{\bf s}|\leq \sqrt{n}$, every queue vector state ${\bf q}\in\mathcal{Q}^{|{\bf s}|}$, and  every job size $w \in\mathbb{R}_+$, the following holds. There exists a unique set $R'\big(m,w,{\bf s},{\bf q}\big)\subset\mathcal{N}_n\backslash {\bf s}^{set}$ of minimal cardinality such that
\begin{align*}
 &\mathbb{P}\Big(f_2\big(m,w,{\bf s},{\bf q},V\big)=j\Big) = \mathbb{P}\Big(f_2\big(m,w,{\bf s},{\bf q},V\big)=k\Big),
\end{align*}
for all $j,k\notin R'\big(m,w,{\bf s},{\bf q}\big)\cup {\bf s}^{set}$. Furthermore, $|R'(m,w,{\bf s},{\bf q})|\leq c$.
\end{proposition}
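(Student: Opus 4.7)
My plan is to mirror the proof of Proposition \ref{prop:symSampling}, replacing Part 1 of Assumption \ref{def:symmetry} by Part 2. I begin by fixing $m$, $w$, ${\bf s}$, and ${\bf q}$ and defining a probability vector ${\bf v}$ whose entries, for $j\in\mathcal{N}_n\setminus{\bf s}^{set}$, are given by ${\bf v}_j=\mathbb{P}\bigl(f_2(m,w,{\bf s},{\bf q},V)=j\bigr)$. I then partition $\mathcal{N}_n\setminus{\bf s}^{set}$ into equivalence classes $A_1,\ldots,A_d$ corresponding to the distinct values $z_1,\ldots,z_d$ of the entries ${\bf v}_j$, let $k_1,\ldots,k_d$ be their cardinalities, assume without loss of generality that $k_d$ is largest, and take $R'(m,w,{\bf s},{\bf q})=A_1\cup\cdots\cup A_{d-1}$. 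Exactly as in the earlier proof, this $R'$ makes the distribution uniform on its complement within $\mathcal{N}_n\setminus {\bf s}^{set}$, and the choice of the most populous class as $A_d$ makes the cardinality of $R'$ minimal.

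Next, I would restrict attention to the set $\Sigma$ of permutations $\sigma$ that fix ${\bf s}^{set}$; for such $\sigma$ we have $\sigma({\bf s})={\bf s}$ componentwise, so that Part 2 of Assumption \ref{def:symmetry} specializes to $\sigma\bigl(f_2(m,w,{\bf s},{\bf q},V)\bigr)\overset{d}{=}f_2\bigl(\sigma_M(m),w,{\bf s},{\bf q},V\bigr)$. Evaluating probabilities at an arbitrary $j\notin{\bf s}^{set}$ yields
$$({\bf v}_{\sigma^{-1}})_j={\bf v}_{\sigma^{-1}(j)}=\mathbb{P}\bigl(f_2(m,w,{\bf s},{\bf q},V)=\sigma^{-1}(j)\bigr)=\mathbb{P}\bigl(f_2(\sigma_M(m),w,{\bf s},{\bf q},V)=j\bigr).$$
The crucial consequence is that the permuted vector ${\bf v}_{\sigma^{-1}}$ depends on $\sigma$ only through $\sigma_M(m)$, and since $\sigma_M(m)$ takes at most $|\mathcal{M}_n|\leq n^c$ distinct values as $\sigma$ ranges over $\Sigma$, the vector ${\bf v}$ admits at most $n^c$ distinct permutations.

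From this point the argument is purely combinatorial and reuses the tools developed in Proposition \ref{prop:symSampling}. The number of distinct permutations of ${\bf v}$ equals the multinomial coefficient ${n-|{\bf s}|\choose k_1!\,k_2!\,\cdots\,k_d!}$, which is at least ${n-|{\bf s}|\choose k_d}$, so Lemma \ref{lem:chooseBound} together with $|{\bf s}|\leq\sqrt n$ yields the dichotomy that, for $n$ large, either $k_d\geq n-|{\bf s}|-c$ or $k_d\leq c$. I would then rule out the second alternative exactly as before: $k_d\leq c$ forces all $k_\alpha\leq c$ and hence $d\geq\lceil(n-|{\bf s}|)/c\rceil\geq\lceil n/(2c)\rceil$, producing at least $d!$ distinct permutations of ${\bf v}$, which eventually exceeds $n^c$. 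Consequently $|R'|=(n-|{\bf s}|)-k_d\leq c$. Uniqueness of the minimal $R'$ follows verbatim from the earlier argument: if some other class $A_\alpha$ also attained the maximal cardinality, then $A_\alpha$ and $A_d$ would be two disjoint subsets each of size exceeding $n/2$, a contradiction.

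I do not anticipate any real obstacle; the proof is a mechanical adaptation of the argument for $f_1$. The only delicate bookkeeping point is ensuring that for $\sigma\in\Sigma$ the vector $\sigma({\bf s})$ appearing on the right-hand side of Part 2 of the symmetry assumption coincides with ${\bf s}$ itself (so that the same queried vector and same responses ${\bf q}$ appear on both sides of the identity in distribution), and this is immediate from the definition of a permutation fixing a set.
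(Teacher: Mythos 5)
Your proof is correct and follows exactly the approach the paper takes: the paper itself disposes of Proposition \ref{prop:symDispatching} by redefining the vector ${\bf v}$ via $f_2$ and declaring the rest "verbatim" the proof of Proposition \ref{prop:symSampling}, which is precisely what you spell out. Your side remark that $\sigma({\bf s})={\bf s}$ for $\sigma$ fixing ${\bf s}^{set}$ pointwise (so that Part 2 of Assumption \ref{def:symmetry} applies with the same ${\bf s}$ and ${\bf q}$ on both sides) is a genuine bookkeeping point worth noting, and it is handled correctly.
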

\begin{proof}
  The proof is analogous to the proof of the previous proposition. We start by defining a vector $\bf v$, whose components are again indexed by $j$ ranging in the set $\mathcal{N}_n\backslash {\bf s}^{set}$, by
  \[ {\bf v}_j = \mathbb{P}\Big(f_2\big(m,w,{\bf s},{\bf q},V\big)=j\Big). \]
  Other than this new definition of the vector $\bf v$, the rest of the proof follows verbatim the one for Proposition \ref{prop:symSampling}.
\end{proof}

\subsection{A sequence of ``bad" events}\label{subsec:badEvent}
In this subsection we introduce a sequence of ``bad" events that we will be focusing on in order to establish a positive lower bound on the delay.

Recall that $T^s_1$ is the time of the first event of the underlying Poisson process of rate $\mu n$ that generates the spontaneous messages from the servers. Recall also that we denote by ${\bf Q}_{i,1}(t)$ the remaining workload of the job being serviced in server $i$, at time $t$, with ${\bf Q}_{i,1}(t)=0$ if no job is present at server $i$. Let
\begin{equation}\label{eq:B}
 B\triangleq \left\{ i: \sum_{j=1}^\infty {\bf Q}_{i,j}(0) \geq 2\gamma \right\},
\end{equation}
which is the set of servers with at least $2\gamma$ remaining workload in their queues, and let $N_b=|B|$, where $\gamma\leq 1$ is a small positive constant, independent of $n$, to be specified later.

Consider the following events:
\begin{itemize}
  \item [(i)] the first $c+1$ jobs after time $0$ are all of size at least $2\gamma$,
  \[ \mathcal{A}_w \triangleq \{W_1,\dots,W_{c+1}\geq 2\gamma\}; \]
  \item [(ii)] the first potential spontaneous message occurs after time $\gamma/n$, and the $(c+1)$-st arrival occurs before time $\gamma/n$,
      \[ \mathcal{A}_a \triangleq \left\{T^s_1>\frac{\gamma}{n} \right\} \cap \left\{ T_{c+1}< \frac{\gamma}{n}\right\}; \]
  \item [(iii)] there are no service completions before time $\gamma/n$,
  \[ \mathcal{A}_s \triangleq \left\{{\bf Q}_{i,1}(0)\notin \Big(0,\frac{\gamma}{n}\Big), \,\forall \, i \right\}; \]
  \item [(iv)] there are at least $\gamma n$ servers that each have at least $2\gamma$ remaining workload at time zero,
      \[ \mathcal{A}_b \triangleq \Big\{N_b \geq \gamma n\Big\}. \]
\end{itemize}
For an interpretation, the event
\[ \mathcal{H}^+_0 \triangleq \mathcal{A}_w\cap\mathcal{A}_a\cap\mathcal{A}_s \cap\mathcal{A}_b, \]
corresponds to an unfavorable situation for the dispatcher. This is because, at time zero, the dispatcher's memory contains possibly useful information on at most $c$ distinguished servers (Propositions \ref{prop:symSampling} and \ref{prop:symDispatching}), and has to accommodate $c+1$ arriving jobs by time $\gamma/n$. On the other hand, a nontrivial fraction of the servers are busy and will remain so until time $\gamma/n$ (event $\mathcal{A}_b$), and it is possible that sampling will not reveal any idle servers (as long as the number of sampled servers is not too large). Thus, at least one of the jobs may end up at a busy server, resulting in positive expected delay. In what follows, we go through the just outlined sequence of unfavorable events, and then, in Subsection \ref{subsec:lowerBound}, we lower bound its probability.

Starting with $\mathcal{H}_0^+$, we define a nested sequence of events, after first introducing some more notation. For $k=1,\dots,c+1$, let ${\bf S}_k$ be the random (hence denoted by an upper case symbol) vector of servers that are sampled upon the arrival of the $k$-th job; its components are denoted by $({\bf S}_k)_i$. For $i=0,1,\ldots, |{\bf S}_k|$, we let
\[ R_{k,i} \triangleq R\Big(M\big(T_k^-\big),W_k,\big(({\bf S}_k)_1,\dots,({\bf S}_k)_{i-1}\big),|{\bf S}_k|\Big) \]
be the (random) subset of servers defined in Proposition \ref{prop:symSampling} (whenever $M\big(T_k^-\big)$, $W_k$, $\big(({\bf S}_k)_1,\dots,({\bf S}_k)_{i-1}\big)$, and $|{\bf S}_k|$ are such that the proposition applies), with the convention that $\big(({\bf S}_k)_1,\dots,({\bf S}_k)_{i-1}\big)=\emptyset$ when $i=1$. Otherwise, we let $R_{k,i}\triangleq \emptyset$. Furthermore, we define
\[ R_k \triangleq \bigcup_{i=1}^{|{\bf S}_k|} R_{k,i}. \]
Moreover, let $D_k$ be the destination of the $k$-th job, and let
\[ R'_k \triangleq R'\Big(M\big(T_k^-\big),W_k,{\bf S}_k,{\bf Q}_{{\bf S}_k}(T_k^-)\Big) \]
be the (random) subset of servers defined in Proposition \ref{prop:symDispatching} (whenever $M\big(T_k^-\big)$, $W_k$, ${\bf S}_k$ and ${\bf Q}_{{\bf S}_k}(T_k^-)$ are such that the proposition applies). Otherwise, we let $R'_k\triangleq \emptyset$. Finally, given a collection of constants $\xi_1,\ldots,\xi_{c+1}$, independent of $n$ and to be determined later, we define a nested sequence of events recursively, by
\begin{align}
 \mathcal{H}_k^- &\triangleq \mathcal{H}_{k-1}^+ \cap \big\{|{\bf S}_k|\leq \xi_k\big\}, \nonumber\\
 \mathcal{H}_k  & \triangleq \mathcal{H}_k^- \cap \big\{ ({\bf S}_k)_i \in R_{k,i}\cup B, \,\, \, i=1,\dots,|{\bf S}_k| \big\},  \nonumber\\
 \mathcal{H}_k^+ &\triangleq \mathcal{H}_k \cap \big\{D_k\in {\bf S}_k^{set}\cup R'_k\cup B\big\}, \label{eq:hkplus}
\end{align}
for $k=1,\dots,c+1$.

\subsection{Lower bound on the probability of ``bad" events}\label{subsec:lowerBound}
In this subsection, we establish a positive lower bound, valid for all $n$ large enough, for the probability of the event $\mathcal{H}^+_{c+1}$. In order to do this, we will obtain such uniform lower bounds for the probability of $\mathcal{H}^+_{k}$, for $k\geq 0$, by induction. We start with the base case.

\begin{lemma}\label{lem:base_case}
  There exists a constant $\alpha_0^+>0$, independent of $n$, such that
  \[ \mathbb{P}(\mathcal{H}_0^+)\geq \alpha_0^+. \]
\end{lemma}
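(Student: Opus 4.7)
The event $\mathcal{H}_0^+ = \mathcal{A}_w \cap \mathcal{A}_a \cap \mathcal{A}_s \cap \mathcal{A}_b$ bundles four constraints---on job sizes, on arrival/message timing, on the absence of imminent service completions, and on the number of busy servers---and my plan is to fix a sufficiently small $\gamma>0$, bound each of the four constituent probabilities below by a constant independent of $n$, and then combine via the product structure of $\Omega = \Omega_A \times \Omega_R \times \Omega_W \times \Omega_U \times \Omega_0$. The easiest is $\mathcal{A}_w$: since the $W_k$ are i.i.d.\ on $\mathbb{R}_+$ with mean $1$, their distribution cannot concentrate entirely at $0$, so I can fix $\gamma_0>0$ with $p_W := \mathbb{P}(W \ge 2\gamma_0) > 0$ and obtain $\mathbb{P}(\mathcal{A}_w) = p_W^{c+1}$ for any $\gamma \le \gamma_0$, a bound that is independent of $n$ and of the policy.

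For $\mathcal{A}_s$ and $\mathcal{A}_b$, which are functions of ${\bf Q}(0)$ alone, I would use first-moment estimates derived from flow balance. In steady state, stability forces the total completion rate to equal $\lambda n$, so the expected number of busy servers is $\lambda n$; moreover, since ${\bf Q}_{i,1}(\cdot)$ decreases at unit rate between arrivals, a PASTA-style argument gives $\mathbb{P}\big({\bf Q}_{i,1}(0) \in (0,\tau]\big) \le r_i \tau$, where $r_i$ is the completion rate at server $i$. Summing over $i$ yields $\mathbb{E}\big[|\{i : {\bf Q}_{i,1}(0) \in (0,\tau]\}|\big] \le \lambda n \tau$. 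Taking $\tau = \gamma/n$ and applying Markov gives $\mathbb{P}(\mathcal{A}_s) \ge 1 - \lambda\gamma$; taking $\tau = 2\gamma$, subtracting from the busy-server count, and applying Markov to $n - N_b$ gives $\mathbb{P}(\mathcal{A}_b) \ge \lambda/2$ once $\gamma$ is small enough relative to $\lambda$.

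For $\mathcal{A}_a = \{T_1^s > \gamma/n\} \cap \{T_{c+1} < \gamma/n\}$, the first piece involves only the Poisson process $R_n$ on $\Omega_R$; since only Poisson events at which $g_1$ actually fires can affect the dynamics, I can without loss of generality cap the effective rate by $\alpha$ (using the message-rate constraint), giving $\mathbb{P}(T_1^s > \gamma/n) \ge e^{-\alpha\gamma}$. For the second piece, setting $\epsilon := \gamma/(2(c+1))$, the inclusions $Z(0) \le \epsilon/n$ and $I_k \le \epsilon/n$ for $k = 2,\ldots,c+1$ together imply $T_{c+1} \le (c+1)\epsilon/n < \gamma/n$. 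By Assumption~\ref{ass:arrivals}, $\mathbb{P}(I_k \le \epsilon/n) \ge \delta_\epsilon$, and the marginal of $Z(0)$ under $\pi_n$ is the equilibrium law with density $\lambda n \bar{F}(\cdot) \ge \lambda n \delta_\epsilon$ on $[0,\epsilon/n]$, so $\mathbb{P}(Z(0) \le \epsilon/n) \ge \lambda \epsilon \delta_\epsilon$.

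Combining, $\mathcal{A}_w$ and $\{T_1^s > \gamma/n\}$ factor out because $\Omega_W$ and $\Omega_R$ are independent of the rest; and conditional on $Z(0)$, the future inter-arrivals $I_2,\ldots,I_{c+1}$ are independent of ${\bf Q}(0)$, so the remaining joint probability reduces to $\mathbb{E}_{\pi_n}\big[\mathds{1}_{\mathcal{A}_s \cap \mathcal{A}_b}({\bf Q}(0))\, h(Z(0))\big]$ with $h(z) \ge \delta_\epsilon^c\,\mathds{1}_{\{z \le \epsilon/n\}}$. The main technical obstacle is then to lower bound $\mathbb{P}_{\pi_n}\big(\mathcal{A}_s \cap \mathcal{A}_b \cap \{Z(0) \le \epsilon/n\}\big)$: under $\pi_n$, the residual $Z(0)$ and the state ${\bf Q}(0)$ are in general correlated (independent only in the Poisson case, by memorylessness), so a crude union or product bound is insufficient. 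I expect to handle this by Palm calculus, arguing that the arrival-biased conditional distribution of ${\bf Q}(0)$ given $\{Z(0) \le \epsilon/n\}$ still assigns non-vanishing probability to $\mathcal{A}_s \cap \mathcal{A}_b$ uniformly in $n$, since the first-moment bounds driving those events depend only on the steady-state flow $\lambda n$ and survive the Palm inversion.
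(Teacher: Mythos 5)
Your overall decomposition of $\mathcal{H}_0^+$ into the four constituent events, with first-moment/flow-balance bounds on $\mathcal{A}_s$ and $\mathcal{A}_b$ (Little's law for the busy-server count, arrival-rate bound on the departure count) and the trivial bound on $\mathcal{A}_w$, matches the paper. The sticking points are in $\mathcal{A}_a$ and in how the four pieces are glued together.

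First, your bound $\mathbb{P}(T_1^s>\gamma/n)\ge e^{-\alpha\gamma}$ does not hold as stated. The event $\{T_1^s>\gamma/n\}$ is defined in terms of the raw Poisson process $R_n$ of rate $\mu n$, so its probability is exactly $e^{-\mu\gamma}$, and ``capping the effective rate'' has no effect on the law of $T_1^s$. If you want to leverage the message-rate budget, you would need to replace $\mathcal{A}_a$ with the weaker event that no server \emph{actually} sends a spontaneous message before time $\gamma/n$ (which is all the downstream argument requires), and then Markov's inequality together with the stationarity of the spontaneous-message point process gives a bound of the form $1-\alpha\gamma$, not $e^{-\alpha\gamma}$, because the actual message process is a state-dependent thinning, not a Poisson process of known rate. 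The paper's own proof simply computes $e^{-\mu\gamma}$ and treats $\mu$ as fixed, so it does not address the possibility that $\mu$ grows with $n$; your instinct to route through the rate budget is reasonable, but the execution is off.

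Second, and more importantly, the ``main technical obstacle'' you defer to Palm calculus is in fact the load-bearing step, and you have not carried it out. The paper sidesteps the issue by construction: the delayed renewal process $A_n$ --- in particular the time $T_1$ of the first arrival --- lives on $\Omega_A$, while $\mathbf{Q}(0)$ lives on $\Omega_0$, and the two spaces are taken to be independent factors of the product space in Section~\ref{s:fund}. This makes $\mathcal{A}_a$ independent of $\mathcal{A}_s\cap\mathcal{A}_b$ by fiat, and the factorization $\mathbb{P}(\mathcal{H}_0^+)=\mathbb{P}(\mathcal{A}_a)\,\mathbb{P}(\mathcal{A}_w)\,\mathbb{P}(\mathcal{A}_s\cap\mathcal{A}_b)$ is then immediate; within that factor, the residual-life formula for $T_1$ and Assumption~\ref{ass:arrivals} handle $\{T_{c+1}<\gamma/n\}$. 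Your proposal instead treats $Z(0)$ as jointly distributed with $\mathbf{Q}(0)$ under $\pi_n$, which leads you to a genuinely harder question about the arrival-biased conditional law of $\mathbf{Q}(0)$ given a small residual; you correctly anticipate that a crude product bound will not do, but the Palm-inversion argument you sketch is exactly what would need to be written out, and as it stands the proof is incomplete at its crux. So this is a real gap relative to the paper: either adopt the paper's product-space independence, or actually prove the uniform lower bound on $\mathbb{P}_{\pi_n}(\mathcal{A}_s\cap\mathcal{A}_b\cap\{Z(0)\le\epsilon/n\})$ that you need.
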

\begin{proof}
  Note that the event $\mathcal{A}_a$ only depends on the processes of arrivals and spontaneous messages after time zero, $\mathcal{A}_w$ only depends on the i.i.d. workloads $W_1,\dots,W_{c+1}$, and $\mathcal{A}_s\cap\mathcal{A}_b$ only depends on the initial queue length vector ${\bf Q}(0)$. It follows that
\[ \mathbb{P}(\mathcal{H}_0^+) = \mathbb{P}(\mathcal{A}_a) \mathbb{P}(\mathcal{A}_w) \mathbb{P}(\mathcal{A}_s\cap\mathcal{A}_b). \]
We will now lower bound each of these probabilities.

Note that $\mathbb{P}(\mathcal{A}_a)$ is the intersection of two independent events. The first is the event that the first arrival in a Poisson process with rate $\mu n$ happens after time $\gamma/n$, or equivalently, it is the event that the first arrival of a Poisson process of rate $\mu$ happens after time $\gamma$, which has positive probability that does not depend on $n$. The second is the event that $c+1$ arrivals of the delayed renewal process $A_n(t)$ occur before time $\gamma/n$, i.e., the event that $T_{c+1}<\gamma/n$. Since the process $({\bf Q}(t),M(t),Z(t))_{t\geq 0}$ is stationary, the first arrival time ($T_1$) is distributed according to the residual time of typical inter-arrival times. In particular, if $F$ is the cdf of typical inter-arrival times of $A_n(t)$ (which have mean $1/\lambda n$), the well-known formula for the distribution of residual times gives
\begin{align*}
 \mathbb{P}\left( T_1<\frac{\gamma}{n(c+1)} \right) &= \lambda n \int\limits_0^{\frac{\gamma}{n(c+1)}} \Big(1-F(u)\Big) du \\
 &= \lambda \int\limits_0^{\frac{\gamma}{c+1}} \left(1-F\left(\frac{v}{n}\right) \right) dv.
\end{align*}
Recall that Assumption \ref{ass:arrivals} states that $1-F(v/n)\geq\delta_v>0$, for all $v>0$ sufficiently small, and for all $n$. As a result, we have
\begin{align}
  \lambda \int\limits_0^{\frac{\gamma}{c+1}} \left(1-F\left(\frac{v}{n}\right)\right) dv &\geq \lambda \int\limits_0^{\frac{\gamma}{c+1}} \left(1-F\left(\frac{\gamma}{n(c+1)}\right)\right) dv \\
  &= \frac{\lambda\gamma}{c+1} \delta_\frac{\gamma}{c+1}, \label{eq:arrivalInequality}
\end{align}
for all $\gamma$ sufficiently small. On the other hand, for $k=2,\dots,c+1$, Assumption \ref{ass:arrivals} also implies that
\[ \mathbb{P}\left( T_k-T_{k-1} \leq \frac{\gamma}{n(c+1)} \right) \geq  \delta_\frac{\gamma}{(c+1)}. \]
Combining this with Equation \eqref{eq:arrivalInequality}, and using the fact that the first arrival time and the subsequent inter-arrival times are independent, we obtain
\begin{align*}
  \mathbb{P}\left( T_{c+1}<\frac{\gamma}{n} \right) &\geq \mathbb{P}\left( \left\{ T_1<\frac{\gamma}{n(c+1)} \right\} \cap \bigcap_{k=2}^{c+1} \left\{ T_k-T_{k-1} \leq \frac{\gamma}{n(c+1)} \right\} \right) \\
  &\geq \mathbb{P}\left( T_1<\frac{\gamma}{n(c+1)} \right) \prod\limits_{k=2}^{c+1} \mathbb{P}\left( T_k-T_{k-1} \leq \frac{\gamma}{n(c+1)} \right) \\
  &\geq \frac{\lambda\gamma}{c+1} \left(\delta_\frac{\gamma}{c+1}\right)^{c+1},
\end{align*}
which is a positive constant independent from $n$.

We also have
\begin{align*}
 \mathbb{P}(\mathcal{A}_w)&=\prod_{i=1}^{c+1} \mathbb{P}(W_i\geq 2\gamma) \\
 & =\mathbb{P}(W_i\geq 2\gamma)^{c+1},
\end{align*}
which is independent of $n$, and positive for $\gamma$ small enough.

We now consider the event $\mathcal{A}_s$. If $\mathcal{A}_s^c$ holds, then there exists a server $i$ such that $0<{\bf Q}_{i,1}(0)\leq \gamma/n$, and thus we have a job departure during $(0,\frac{\gamma}{n}]$. Let $X$ be the number of service completions during $(0,\frac{\gamma}{n}]$. The occurrence of $\mathcal{A}_s^c$ implies $X\geq 1$. Furthermore, the expected number of service completions in steady-state during any fixed interval must be equal to the expected number of arrivals, so that
\begin{equation}\label{eq:intermediate1}
\mathbb{P}(\mathcal{A}_s^c)\leq \mathbb{E}[X]=(n\lambda)\frac{\gamma}{n}=
\lambda \gamma.
\end{equation}

We now consider the event $\mathcal{A}_b$. Recall that
\[ N_b=\left|\left\{i:\sum_{j=1}^\infty {\bf Q}_{i,j}(0)\geq 2\gamma\right\}\right|. \]
Let
\[ N_I=\left|\left\{i:\sum_{j=1}^\infty {\bf Q}_{i,j}(0)=0\right\}\right|, \]
and
\[ N_d=\left|\left\{i:0<\sum_{j=1}^\infty {\bf Q}_{i,j}(0)<2\gamma\right\}\right|. \]
Then, $n=N_b+N_I+N_d$. Furthermore, all servers with $0<\sum_{j=1}^\infty {\bf Q}_{i,j}(0)<2\gamma$ will have a departure in $(0,2\gamma)$. Let $Y$ be the number of departures (service completions) during $(0,2\gamma)$. Then, $Y\geq N_d$. We use once more that the expected number of service completions in steady-state during any fixed interval must be equal to the expected number of arrivals, to obtain
\[ n\lambda 2\gamma = \mathbb{E}[Y] \geq \mathbb{E}[N_d]. \]
Furthermore, by applying Little's law to the number of busy servers, in steady-state, we obtain
\[ \mathbb{E}[N_I]=(1-\lambda) n. \]
Hence
\begin{equation*}
  \mathbb{E}[N_b] = n-\mathbb{E}[N_I]-\mathbb{E}[N_d] \geq n(\lambda-2\lambda \gamma).
\end{equation*}
On the other hand, we have
\begin{align*}
  \mathbb{E}[N_b] &\leq \mathbb{P}(N_b\leq \gamma n) \gamma n + \mathbb{P}(N_b>\gamma n)n  \\
&\leq \gamma n + \mathbb{P}(N_b \geq \gamma n)n \\
&= \gamma n + \mathbb{P}(\mathcal{A}_b)n.
\end{align*}
Combining these last two inequalities, we obtain
\begin{equation}\label{eq:intermediate2}
 \mathbb{P}(\mathcal{A}_b)\geq \lambda-2\lambda \gamma  - \gamma.
\end{equation}
Finally, using Equations \eqref{eq:intermediate1} and \eqref{eq:intermediate2}, we have
\begin{align*}
  \mathbb{P}(\mathcal{A}_s\cap\mathcal{A}_b) &= \mathbb{P}(\mathcal{A}_b) - \mathbb{P}(\mathcal{A}_b \cap \mathcal{A}_s^c) \\
&\geq \mathbb{P}(\mathcal{A}_b) - \mathbb{P}(\mathcal{A}_s^c) \\
&\geq \lambda -2\lambda \gamma - \gamma - \gamma \lambda,
\end{align*}
which is a positive constant if $\gamma$ is chosen small enough.
\end{proof}

We now carry out the inductive step, from $k-1$ to $k$, in a sequence of three lemmas. We make the induction hypothesis that there exists a positive constant $\alpha_{k-1}^+$ such that $\mathbb{P}(\mathcal{H}_{k-1}^+) \geq \alpha_{k-1}^+$, and we sequentially prove that there exist positive constants $\alpha_k^-$, $\alpha_k$, and $\alpha_k^+$ such that $\mathbb{P}(\mathcal{H}_{k}^-) \geq \alpha_k^-$ (Lemma \ref{lem:induction_step1}), $\mathbb{P}(\mathcal{H}_{k}) \geq \alpha_k$ (Proposition \ref{prop:induction_step2}), and $\mathbb{P}(\mathcal{H}_{k}^+) \geq \alpha_k^+$ (Lemma \ref{lem:induction_step3}).

\begin{lemma}\label{lem:induction_step1}
Suppose that $\mathbb{P}(\mathcal{H}_{k-1}^+) \geq \alpha_{k-1}^+>0$ and that the constant $\xi_k$ is chosen to be large enough. Then, there exists a constant $\alpha_k^->0$, such that for all $n$ large enough, we have $\mathbb{P}(\mathcal{H}_{k}^-) \geq \alpha_k^-$.
\end{lemma}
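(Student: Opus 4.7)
The plan is to use the bound on the average message rate to show that, conditional on $\mathcal{H}_{k-1}^+$, the number $|{\bf S}_k|$ of queried servers at the $k$-th arrival cannot be too large with more than negligible probability. The argument reduces to a single application of Markov's inequality, once one translates the messaging hypothesis into a usable finite-time bound.

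First, I would translate the hypothesis ``average message rate at most $\alpha n$ in expectation'' into a bound on the expected number of sampling-induced messages in the deterministic interval $[0,\gamma/n]$. Under the invariant measure $\pi_n$ the process, and hence the message counting process, has stationary increments, so $\mathbb{E}[\text{messages in } [0,T]]$ is linear in $T\geq 0$. By the ergodic theorem, $(1/t)\cdot[\text{messages in }[0,t]]$ converges almost surely to the conditional expectation of the one-unit message count given the invariant $\sigma$-algebra; taking expectations and using the hypothesis yields $\mathbb{E}[\text{messages in }[0,1]]\leq \alpha n$. Combined with linearity, this gives
\[
\mathbb{E}\!\left[\sum_{j=1}^{A_n(\gamma/n)} 2|{\bf S}_j|\right] \;\leq\; \alpha \gamma.
\]

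Second, I would observe that on $\mathcal{H}_{k-1}^+$ the event $\mathcal{A}_a$ holds; in particular $T_{c+1}<\gamma/n$, so for every $k\leq c+1$ the $k$-th arrival occurs in $[0,\gamma/n]$ and hence $|{\bf S}_k|$ is one of the terms in the above sum. Therefore
\[
\mathbb{E}\!\left[|{\bf S}_k|\,\mathds{1}_{\mathcal{H}_{k-1}^+}\right] \;\leq\; \mathbb{E}\!\left[\sum_{j=1}^{A_n(\gamma/n)} |{\bf S}_j|\right] \;\leq\; \frac{\alpha\gamma}{2},
\]
and Markov's inequality yields
\[
\mathbb{P}\!\left(|{\bf S}_k|>\xi_k,\ \mathcal{H}_{k-1}^+\right) \;\leq\; \frac{\alpha\gamma}{2\xi_k}.
\]
Choosing $\xi_k$ large enough that $\alpha\gamma/(2\xi_k)\leq \alpha_{k-1}^+/2$ (e.g.\ $\xi_k := \alpha\gamma/\alpha_{k-1}^+$), I obtain
\[
\mathbb{P}(\mathcal{H}_k^-) \;=\; \mathbb{P}(\mathcal{H}_{k-1}^+) \;-\; \mathbb{P}\!\left(\mathcal{H}_{k-1}^+,\,|{\bf S}_k|>\xi_k\right) \;\geq\; \alpha_{k-1}^+ \;-\; \frac{\alpha_{k-1}^+}{2} \;=\; \frac{\alpha_{k-1}^+}{2},
\]
so we may take $\alpha_k^- := \alpha_{k-1}^+/2>0$, which is independent of $n$.

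The main (and essentially only) obstacle is the first step, namely rigorously passing from the limsup-in-expectation hypothesis to the finite-time bound $\mathbb{E}\!\left[\sum_{j=1}^{A_n(\gamma/n)} 2|{\bf S}_j|\right]\leq \alpha\gamma$. This uses stationarity of $({\bf Q}(t),M(t),Z(t))_{t\geq 0}$ under $\pi_n$, together with the ergodic theorem for stationary point processes (or, equivalently, an ergodic-decomposition argument). Once this bound is in hand, the inductive step is immediate from Markov's inequality and the induction hypothesis $\mathbb{P}(\mathcal{H}_{k-1}^+)\geq \alpha_{k-1}^+>0$.
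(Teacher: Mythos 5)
Your proposal is correct and follows essentially the same strategy as the paper: reduce the message-rate hypothesis to the finite-time bound $\mathbb{E}\bigl[\sum_{j\leq A_n(\gamma/n)} 2|{\bf S}_j|\bigr]\leq\alpha\gamma$ using stationarity, apply Markov's inequality to $|{\bf S}_k|$ on the event $\mathcal{H}_{k-1}^+$, and take $\xi_k$ large relative to $\alpha\gamma/\alpha_{k-1}^+$. The only differences are cosmetic: you invoke the ergodic theorem to pass from the $\limsup$ hypothesis to the finite-time expectation where the paper invokes Fatou's lemma combined with constancy in $t$ of $\mathbb{E}\bigl[\frac{1}{t}\sum_{j\leq A_n(t)}2|{\bf S}_j|\bigr]$, and you apply Markov directly to $|{\bf S}_k|\mathds{1}_{\mathcal{H}_{k-1}^+}$ rather than first conditioning on $\{A_n(\gamma/n)\geq c+1\}$, which is a slightly cleaner packaging of the same estimate.
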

\begin{proof}
First, recall our assumption that the average message rate (cf. Equation \eqref{eq:messageRate}) is upper bounded by $\alpha n$ in expectation. Therefore,
\[ \mathbb{E}\left[ \limsup_{t\to\infty} \frac{1}{t} \sum\limits_{j=1}^{A_n(t)} 2 |{\bf S}_j| \right] \leq \alpha n, \]
where $A_n(t)$ is the number of arrivals until time $t$. By Fatou's lemma, we also have
\begin{equation*}
 \limsup_{t\to\infty} \mathbb{E}\left[ \frac{1}{t} \sum\limits_{j=1}^{A_n(t)} 2 |{\bf S}_j| \right] \leq \alpha n.
\end{equation*}
Recall that the process $({\bf Q}(t),M(t),Z(t))_{t\geq 0}$ is stationary. Then, since the sampled vectors are a deterministic function of the state, and i.i.d. randomization variables, the point process of arrivals with the sampled vectors as marks, is also stationary. As a result, the expression
\begin{equation*}
 \mathbb{E}\left[ \frac{1}{t} \sum\limits_{j=1}^{A_n(t)} 2 |{\bf S}_j| \right]
\end{equation*}
is independent from $t$ (see Equation (1.2.9) of \cite{PASTA}). In particular, for $t=\gamma/n$, we have that
\begin{equation}\label{eq:messageBound}
 \mathbb{E}\left[ \frac{1}{\gamma} \sum\limits_{j=1}^{A_n\left(\frac{\gamma}{n}\right)} 2 |{\bf S}_j| \right] \leq \alpha.
\end{equation}
Moreover, since $k\leq c+1$, we have
\begin{align*}
  \mathbb{E}\left[ \sum\limits_{j=1}^{A_n\left(\frac{\gamma}{n}\right)} |{\bf S}_j| \right] &\geq \mathbb{E}\left[ \left. \sum\limits_{j=1}^{A_n\left(\frac{\gamma}{n}\right)} |{\bf S}_j| \,\right|\, A_n\left(\frac{\gamma}{n}\right)\geq c+1 \right] \mathbb{P}\left( A_n\left(\frac{\gamma}{n}\right)\geq c+1 \right) \\
  &\geq \mathbb{E}\left[ |{\bf S}_k| \,\left|\, A_n\left(\frac{\gamma}{n}\right)\geq c+1 \right. \right] \mathbb{P}\left( A_n\left(\frac{\gamma}{n}\right)\geq c+1 \right).
\end{align*}
Combining this with Equation \eqref{eq:messageBound}, we obtain
\[ \mathbb{E}\left[ \frac{2}{\gamma} |{\bf S}_k| \,\left|\, A_n\left(\frac{\gamma}{n}\right)\geq c+1 \right. \right] \mathbb{P}\left( A_n\left(\frac{\gamma}{n}\right)\geq c+1 \right) \leq \alpha. \]
This yields the upper bound
\begin{equation}\label{eq:samplingUpperBound}
 \mathbb{E}\left[ |{\bf S}_k| \,\left|\, A_n\left(\frac{\gamma}{n}\right)\geq c+1 \right. \right] \leq \frac{\alpha \gamma}{2 \mathbb{P}\left( A_n\left(\frac{\gamma}{n}\right)\geq c+1 \right)}.
\end{equation}

On the other hand, using the fact that $\mathcal{H}_{k-1}^+\subset \{A_n(\gamma/n)\geq c+1\}$, we have
\begin{align}
  \mathbb{P}\big(\mathcal{H}_k^-\big)& = \mathbb{P}\Big(\mathcal{H}_{k-1}^+ \cap \big\{|{\bf S}_k|\leq \xi_k\big\}\Big) \nonumber \\
  &= \mathbb{P}\left(\mathcal{H}_{k-1}^+ \cap \left\{ A_n\left(\frac{\gamma}{n}\right)\geq c+1 \right\} \cap \big\{|{\bf S}_k|\leq \xi_k\big\}\right) \nonumber \\
  &= \mathbb{P}\left( \mathcal{H}_{k-1}^+ \cap \big\{|{\bf S}_k|\leq \xi_k\big\} \,\left|\, A_n\left(\frac{\gamma}{n}\right)\geq c+1 \right. \right) \mathbb{P}\left( A_n\left(\frac{\gamma}{n}\right)\geq c+1 \right) \nonumber \\
&\geq \mathbb{P}\big(\mathcal{H}_{k-1}^+\big) - \mathbb{P}\left( |{\bf S}_k| > \xi_k \,\left|\, A_n\left(\frac{\gamma}{n}\right)\geq c+1 \right. \right) \mathbb{P}\left( A_n\left(\frac{\gamma}{n}\right)\geq c+1 \right). \label{eq:firstInequality}
\end{align}
Furthermore, for any constant $\xi_k>0$, Markov's inequality implies
\begin{align}
  &\mathbb{P}\left( |{\bf S}_k| > \xi_k \,\left|\, A_n\left(\frac{\gamma}{n}\right)\geq c+1 \right. \right) \leq \frac{\mathbb{E}\left[ \, |{\bf S}_k| \,\left|\, A_n\left(\frac{\gamma}{n}\right)\geq c+1 \right. \right] }{\xi_k}, \label{eq:lastInequality}
\end{align}
which combined with Equation \eqref{eq:firstInequality} yields
\begin{equation}
  \mathbb{P}\big(\mathcal{H}_k^-\big) \geq \mathbb{P}\big(\mathcal{H}_{k-1}^+\big) - \frac{\mathbb{E}\left[ \, |{\bf S}_k| \,\left|\, A_n\left(\frac{\gamma}{n}\right)\geq c+1 \right. \right] }{\xi_k} \mathbb{P}\left( A_n\left(\frac{\gamma}{n}\right)\geq c+1 \right).
\end{equation}
Applying the inequality \eqref{eq:samplingUpperBound} to the equation above, we obtain
\begin{equation}
  \mathbb{P}\big(\mathcal{H}_k^-\big) \geq \mathbb{P}\big(\mathcal{H}_{k-1}^+\big) - \frac{\alpha\gamma}{2 \xi_k}.
\end{equation}
Finally, combining this with the fact that $\mathbb{P}(\mathcal{H}_{k-1}^+) \geq \alpha_{k-1}^+>0$, we have that
\begin{align*}
  \mathbb{P}\big(\mathcal{H}_k^-\big)& \geq \alpha_{k-1}^+ - \frac{\alpha\gamma}{2 \xi_k} \triangleq \alpha_k^-,
\end{align*}
which is positive for all $\xi_k$ large enough.
\end{proof}

\begin{proposition}\label{prop:induction_step2}
Suppose that $\mathbb{P}(\mathcal{H}_{k}^-) \geq \alpha_k^-$, and that the constant $\xi_k$ is chosen large enough. Then, there exists a constant $\alpha_k>0$, such that for all $n$ large enough, we have $\mathbb{P}(\mathcal{H}_{k}) \geq \alpha_k$.
\end{proposition}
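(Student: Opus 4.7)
The plan is to show that, conditional on $\mathcal{H}_k^-$, each of the at most $\xi_k$ sampled servers lands in the allowed set $R_{k,i}\cup B$ with probability bounded below by a positive constant depending only on $\gamma$, and then apply the chain rule over $i=1,\ldots,|{\bf S}_k|$. The two key ingredients are Proposition~\ref{prop:symSampling}, which describes the conditional distribution of the $i$-th sampled server, and the inclusion $\mathcal{A}_b\subseteq \mathcal{H}_{k-1}^+\subseteq \mathcal{H}_k^-$, which guarantees $|B|\geq \gamma n$ throughout $\mathcal{H}_k^-$.

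Concretely, fix $\ell\in\{1,\ldots,\xi_k\}$ and work on the event $\mathcal{H}_{k-1}^+\cap\{|{\bf S}_k|=\ell\}$. For each $i=1,\ldots,\ell$, apply Proposition~\ref{prop:symSampling} with $m=M(T_k^-)$, $w=W_k$, ${\bf s}=\big(({\bf S}_k)_1,\ldots,({\bf S}_k)_{i-1}\big)$, and the specified $\ell$; for $n$ large enough one has $|{\bf s}|\leq \xi_k\leq \sqrt{n}$, so the hypotheses of the proposition are met. Conditional on $M(T_k^-)$, $W_k$, $|{\bf S}_k|=\ell$, and on the values of the first $i-1$ samples, $({\bf S}_k)_i$ either falls in $R_{k,i}$ (a deterministic set of size at most $c$), or is uniformly distributed over $\mathcal{N}_n\setminus({\bf s}^{set}\cup R_{k,i})$. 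On $\mathcal{A}_b$ we have $|B|\geq \gamma n$, and in the latter case
$$ \mathbb{P}\Big(({\bf S}_k)_i\in B \,\Big|\, ({\bf S}_k)_i\notin R_{k,i}\Big) \geq \frac{|B|-|{\bf s}^{set}|-|R_{k,i}|}{n}\geq \frac{\gamma n-\xi_k-c}{n}\geq \frac{\gamma}{2}, $$
for all $n$ large enough. Combining the two cases,
$$ \mathbb{P}\Big(({\bf S}_k)_i\in R_{k,i}\cup B \,\Big|\, M(T_k^-),W_k,|{\bf S}_k|=\ell,({\bf S}_k)_1,\ldots,({\bf S}_k)_{i-1}\Big)\geq \frac{\gamma}{2}. $$

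Chaining this bound over $i=1,\ldots,\ell$ gives
$$ \mathbb{P}\Big(({\bf S}_k)_i\in R_{k,i}\cup B,\ \forall\, i\leq\ell \,\Big|\, M(T_k^-),W_k,|{\bf S}_k|=\ell\Big)\geq \left(\frac{\gamma}{2}\right)^\ell\geq\left(\frac{\gamma}{2}\right)^{\xi_k}. $$
Multiplying by the probability of the conditioning event and summing over $\ell\in\{0,1,\ldots,\xi_k\}$ (the case $\ell=0$ being vacuous) yields
$$ \mathbb{P}(\mathcal{H}_k)\geq \left(\frac{\gamma}{2}\right)^{\xi_k}\mathbb{P}(\mathcal{H}_k^-)\geq \left(\frac{\gamma}{2}\right)^{\xi_k}\alpha_k^-\triangleq\alpha_k>0. $$
The main obstacle is the bookkeeping in the successive applications of Proposition~\ref{prop:symSampling}: one must check that, along the sample paths of interest, the conditioning events $B(m,w;{\bf s},\ell)$ of that proposition have positive probability and that $|{\bf s}|\leq \sqrt{n}$; both hold for $n$ large, since $\xi_k$ was fixed at a finite (though possibly large) constant in Lemma~\ref{lem:induction_step1}. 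Note that $\alpha_k=(\gamma/2)^{\xi_k}\alpha_k^-$ shrinks as $\xi_k$ grows, but is independent of $n$.
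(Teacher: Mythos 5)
Your proof is correct and follows essentially the same approach as the paper: decompose $\mathcal{H}_k$ over the possible values of $\ell=|{\bf S}_k|$, chain conditional probabilities over $i=1,\dots,\ell$, and show each factor is at least $\gamma/2$ using Proposition~\ref{prop:symSampling} together with the lower bound $|B|\ge\gamma n$ on $\mathcal{A}_b$. The paper isolates the per-coordinate bound $\ge\gamma/2$ as a separate lemma (Lemma~\ref{l:tedious}) and defers its ``tedious'' proof to an appendix; the only point where your sketch is slightly loose compared to that proof is the conditional-independence bookkeeping: you condition on $M(T_k^-)$, $W_k$, $|{\bf S}_k|=\ell$ and the earlier samples but then use the random set $B$ (and $|B|\ge\gamma n$) inside the displayed bound, whereas the paper additionally conditions on $B=b$ and then carefully argues that, once $m,w,b$ and the first $i-1$ samples are fixed, the remaining randomness in $({\bf S}_k)_i$ comes only from the independent variable $U_k$, so that the conditioning on $\mathcal{H}_{k-1}^+$ (which includes future arrival times through $\mathcal{A}_a$ and initial-state events through $\mathcal{A}_s,\mathcal{A}_b$) can be dropped. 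You acknowledge this as ``the main obstacle,'' and filling it in would make your argument coincide with the paper's.
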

\begin{proof}
Recall the definitions
\begin{align*}
 \mathcal{H}_k &= \mathcal{H}_k^- \cap \big\{ ({\bf S}_k)_i \in R_{k,i}\cup B, \,\, \, i=1,\dots,|{\bf S}_k| \big\},
\end{align*}
and
\begin{align*}
 \mathcal{H}_k^- &= \mathcal{H}_{k-1}^+ \cap \{|{\bf S}_k|\leq \xi_k\}.
\end{align*}
For $i=1,\dots,|{\bf S}_k|$, let us denote
\[ H_{k,i} \triangleq \big\{ ({\bf S}_k)_i \in R_{k,i}\cup B \big\}. \]
Then,
\begin{align}
  &\mathbb{P}(\mathcal{H}_{k}) = \mathbb{P}\Big(\mathcal{H}_{k}^- \cap \big\{ ({\bf S}_k)_i \in R_{k,i}\cup B, \,\, i=1,\dots,|{\bf S}_k| \big\} \Big) \nonumber \\
  &\quad=\sum\limits_{\ell} \mathbb{P}\left(\mathcal{H}_{k-1}^+ \cap \{ |{\bf S}_k|=\ell \} \cap \bigcap_{i=1}^{\ell} H_{k,i} \right) \nonumber \\
  &\quad= \sum\limits_{\ell} \mathbb{P}\left( \left. \bigcap_{i=1}^{\ell} H_{k,i} \,\, \right| \,\, \mathcal{H}_{k-1}^+ \cap \{ |{\bf S}_k|=\ell \} \right) \mathbb{P}\big( \mathcal{H}_{k-1}^+ \cap \{ |{\bf S}_k|=\ell \} \big) \nonumber \\
  &\quad= \sum\limits_{\ell} \mathbb{P}\big( \mathcal{H}_{k-1}^+ \cap \{ |{\bf S}_k|=\ell \} \big) \prod_{i=1}^{\ell} \mathbb{P}\left( H_{k,i} \,\, \left| \,\, \mathcal{H}_{k-1}^+ \cap \{ |{\bf S}_k|=\ell \} \cap \bigcap_{j=1}^{i-1} H_{k,j} \right. \right), \label{eq:firstProduct}
\end{align}
where the sum is over all integers $\ell$ such that the conditional probabilities above are well-defined. Intuitively, in the last step, we are treating the selection of the random vector ${\bf S}_k$ as a sequential selection of its components, which leads us to consider the product of suitable conditional probabilities. The next lemma provides a lower bound for the factors in this product.

\begin{lemma}\label{l:tedious}
For all $n$ large enough, we have
\[ \mathbb{P}\left( H_{k,i} \,\, \left| \,\, \mathcal{H}_{k-1}^+ \cap \{ |{\bf S}_k|=\ell \} \cap \bigcap_{j=1}^{i-1} H_{k,j} \right. \right) \geq \frac{\gamma}{2}, \]
for all $\ell\leq\xi_k$ and $i\leq\ell$ such that the conditional probability above is well-defined.
\end{lemma}

The idea of the proof of this lemma is that when a next component, $({\bf S}_k)_i$ is chosen, it is either a ``distinguished'' server, in the set $R_{k,i}$, or else it is a server chosen uniformly outside the set $R_{k,i}$ (cf.\ Proposition \ref{prop:symSampling}), in which case it has a substantial probability of being a busy server, in the set $B$. Although the intuition is clear, the formal argument is rather tedious and is deferred to Appendix \ref{app:inductionStep2}.\\

Applying Lemma \ref{l:tedious} to Equation \eqref{eq:firstProduct}, and using the fact that $\mathbb{P}\big( \mathcal{H}_{k}^- \big)\geq\alpha_k^->0$, we obtain
\begin{align*}
  \mathbb{P}(\mathcal{H}_{k}) &\geq \sum\limits_{\ell} \mathbb{P}\big( \mathcal{H}_{k-1}^+ \cap \{ |{\bf S}_k|=\ell \} \big) \left( \frac{\gamma}{2} \right)^\ell \\
  &\geq \mathbb{P}\big( \mathcal{H}_{k-1}^+ \cap  \{ |{\bf S}_k|\leq \xi_k \} \big) \left( \frac{\gamma}{2} \right)^{\xi_k}, \\
  &= \mathbb{P}\big( \mathcal{H}_{k}^- \big) \left( \frac{\gamma}{2} \right)^{\xi_k} \\
  &\geq \alpha_k^- \left( \frac{\gamma}{2} \right)^{\xi_k} \triangleq \alpha_k >0,
\end{align*}
for all $n$ large enough.
\end{proof}

\begin{lemma}\label{lem:induction_step3}
Suppose that $\mathbb{P}(\mathcal{H}_{k}) \geq \alpha_k$. Then, there exist a constant $\alpha_k^+>0$, such that for all $n$ large enough, we have $\mathbb{P}(\mathcal{H}_{k}^+) \geq \alpha_k^+$.
\end{lemma}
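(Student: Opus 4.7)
The plan is to condition on the $\sigma$-field $\mathcal{F}_k$ generated by all randomness prior to the $k$-th dispatching decision, including $M(T_k^-)$, $W_k$, ${\bf S}_k$ (which depends on $U_k$), and the queue states ${\bf Q}_{{\bf S}_k}(T_k^-)$, and then to invoke Proposition \ref{prop:symDispatching} to control the conditional distribution of $D_k$. Under this conditioning, the set $B$ defined in \eqref{eq:B}, the set $R'_k$, the sampled vector ${\bf S}_k$, and the indicator $\mathds{1}_{\mathcal{H}_k}$ are all $\mathcal{F}_k$-measurable, while $D_k=f_2\big(M(T_k^-),W_k,{\bf S}_k,{\bf Q}_{{\bf S}_k}(T_k^-),V_k\big)$ depends additionally only on the independent uniform $V_k$.

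On $\mathcal{H}_k$ we have $|{\bf S}_k|\leq \xi_k\leq\sqrt n$ for $n$ large enough, so Proposition \ref{prop:symDispatching} applies. Setting $A\triangleq {\bf S}_k^{set}\cup R'_k$ and $p\triangleq \mathbb{P}(D_k\notin A\mid \mathcal{F}_k)$, the proposition yields $\mathbb{P}(D_k=j\mid \mathcal{F}_k)=p/(n-|A|)$ for every $j\notin A$. Since $A$ and $B\setminus A$ are disjoint, this gives
\[
\mathbb{P}\big(D_k\in A\cup B\,\big|\,\mathcal{F}_k\big)=(1-p)+p\cdot\frac{|B\setminus A|}{n-|A|}.
\]
It remains to lower bound this quantity on $\mathcal{H}_k$. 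Since $\mathcal{H}_k\subseteq \mathcal{H}_k^-\cap\mathcal{A}_b$, we have $|{\bf S}_k|\leq \xi_k$, $|R'_k|\leq c$, and $|B|\geq \gamma n$, so $|A|\leq \xi_k+c$ and $|B\setminus A|\geq \gamma n-\xi_k-c\geq \gamma n/2$ for $n$ large enough. Hence, on $\mathcal{H}_k$,
\[
\mathbb{P}\big(D_k\in A\cup B\,\big|\,\mathcal{F}_k\big)\geq (1-p)+\frac{p\gamma}{2}\geq\frac{\gamma}{2}.
\]

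Taking expectations yields
\[
\mathbb{P}(\mathcal{H}_k^+)=\mathbb{E}\!\left[\mathds{1}_{\mathcal{H}_k}\,\mathbb{P}\big(D_k\in A\cup B\,\big|\,\mathcal{F}_k\big)\right]\geq\frac{\gamma}{2}\,\mathbb{P}(\mathcal{H}_k)\geq\frac{\gamma\alpha_k}{2},
\]
so we may set $\alpha_k^+\triangleq \gamma\alpha_k/2>0$. No deep new idea is needed beyond Proposition \ref{prop:symDispatching} together with the fact that $\mathcal{H}_k$ guarantees a linearly many busy servers; the main subtlety is the bookkeeping of $\mathcal{F}_k$ to ensure that $B$, ${\bf S}_k$, and $R'_k$ are all measurable so that the conditional distribution of $D_k$ is a deterministic function of the independent randomization $V_k$.
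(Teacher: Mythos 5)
Your proof is correct, and it is a clean instantiation of exactly what the paper sketches: the paper omits the argument, saying it mirrors the proof of Proposition~\ref{prop:induction_step2} (via the analogue of Lemma~\ref{l:tedious}) with $\xi_k=1$, and your proof makes that precise using Proposition~\ref{prop:symDispatching} in place of Proposition~\ref{prop:symSampling} and the randomization variable $V_k$ in place of $U_k$. Where the paper's Lemma~\ref{l:tedious} integrates over $(m,w,b,{\bf s})$ and then invokes conditional independence, you condition directly on a $\sigma$-field $\mathcal{F}_k$ and use that $V_k\perp\mathcal{F}_k$; this is a slightly tidier way to organize the same computation, with the two-term split $(1-p)+p\cdot|B\setminus A|/(n-|A|)\geq\gamma/2$ matching the decomposition in \eqref{eq:split} of Appendix~\ref{app:inductionStep2}. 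One small point of rigor: $\mathcal{H}_k$ depends on $W_{c+1}$, $T_{c+1}$, and $T^s_1$ via $\mathcal{H}_0^+$, which (for $k<c+1$) are not literally ``prior to the $k$-th dispatching decision,'' so you should define $\mathcal{F}_k$ as the $\sigma$-field generated by all the fundamental processes, initial conditions, and all randomization variables \emph{except} $V_k$; with that definition, $B$, ${\bf S}_k$, $R'_k$, $\mathds{1}_{\mathcal{H}_k}$ are all $\mathcal{F}_k$-measurable and $V_k$ is independent of $\mathcal{F}_k$, and the rest of your argument goes through verbatim.
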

The proof is similar to the proof of Proposition \ref{prop:induction_step2} but with $\xi_k=1$, and it is  omitted. Intuitively, choosing the destination of a job has the same statistical properties as choosing one more server to sample, which brings us back to the setting of Proposition \ref{prop:induction_step2}. \\

This concludes the induction step. It follows that there exists a constant $\alpha_{c+1}^+>0$, which is independent of $n$, and such that $\mathbb{P}(\mathcal{H}_{c+1}^+) \geq \alpha_{c+1}^+$.

\subsection{Upper bound on the number of useful distinguished servers}\label{sec:boundUsefulServers}
Let us provide some intuition on what comes next. The dispatcher initially may treat in a non-typical manner the servers in an initial set of at most $c$ distinguished servers. As servers get sampled, the dispatcher acquires and possibly stores information about other servers. Ultimately, at the time of the $(c+1)$-st arrival, the dispatcher may have acquired information and therefore treat in a special manner (i.e., asymmetrically) the servers in the set
\begin{equation}\label{eq:definitionR}
 \overline{R}\triangleq \bigcup_{k=1}^{c+1}
 \left( R_k \cup R'_k \right),
\end{equation}
Recall that, for  $k=1,\dots,c+1$, we have
\begin{align}
 R_k = \bigcup_{i=1}^{|{\bf S}_k|} R_{k,i},
\end{align}
where each of the sets in the union has cardinality at most $c$, by Proposition \ref{prop:symSampling}. Furthermore, for $k=1,\dots,c+1$, the cardinality of $R'_k$ is also at most $c$, by Proposition \ref{prop:symDispatching}. It follows that
\begin{equation}\label{eq:RfirstBound}
  \big|\overline{R}\big|\leq c\sum_{k=1}^{c+1} \big( 1+|{\bf S}_k| \big).
\end{equation}
If we are to rely solely on this upper bound, the size of $\overline{R}$ can be larger than $c+1$, and it is possible in principle that the knowledge of so many ``distinguished'' servers (in the set $\overline{R}$) is enough for the dispatcher to identify $c+1$ idle servers to which to route the first $c+1$ jobs. On the other hand, under the event $\mathcal{H}_{c+1}^+$, all new information comes from servers that are ``busy'' (in the set $B$), and hence cannot be useful for the dispatching decisions. The next proposition states that for every sample path $\omega\in \mathcal{H}^+_{c+1}$, the set of idle (and therefore, potentially useful) servers on which information is available, namely, the set $\overline{R}\backslash B$, has cardinality of at most $c$.

\begin{proposition}\label{prop:cardinalityBound}
  The event $\mathcal{H}_{c+1}^+$ implies the event $\left| \overline{R}\backslash B \right| \leq c$.
\end{proposition}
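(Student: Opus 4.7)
My plan is to prove the stronger inductive claim that, under the event $\mathcal{H}_{c+1}^+$, $\overline{R}_k \setminus B \subseteq I$ holds for every $k \in \{0, 1, \ldots, c+1\}$, where $\overline{R}_k := \bigcup_{j=1}^{k}(R_j \cup R'_j)$ (with $\overline{R}_0 := \emptyset$) and $I$ is a fixed subset of $\mathcal{N}_n$ of cardinality at most $c$, determined by the initial memory state $M(0)$. Taking $k = c+1$ then yields $|\overline{R} \setminus B| \leq |I| \leq c$.

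To define $I$, let $\Sigma_0 = \{\sigma \in S_n : \text{some } \sigma_M \text{ from Assumption \ref{def:symmetry} satisfies } \sigma_M(M(0)) = M(0)\}$, and let $I = \{j \in \mathcal{N}_n : \sigma(j) = j \text{ for every } \sigma \in \Sigma_0\}$. The orbit of $M(0)$ under the action $\sigma \mapsto \sigma_M(M(0))$ has cardinality at most $|\mathcal{M}_n| \leq n^c$, so by orbit-stabilizer $|\Sigma_0| \geq n!/n^c$. Since $\Sigma_0$ fixes $I$ pointwise, $\Sigma_0 \subseteq S_{I^c}$, and therefore $(n-|I|)! \geq n!/n^c$, i.e., $n(n-1)\cdots(n-|I|+1) \leq n^c$. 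The same elementary computation as at the end of the proof of Proposition \ref{prop:symSampling} (cf.\ Lemma \ref{lem:chooseBound}) forces $|I| \leq c$ for all $n$ large enough.

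For the inductive step, suppose $\overline{R}_{k-1} \setminus B \subseteq I$. The crucial observation is that, under $\mathcal{H}_{c+1}^+$, the memory state $M(T_k^-)$ ``depends only on'' servers in $I \cup B$: since $\mathcal{H}_{c+1}^+ \subseteq \mathcal{A}_a \cap \mathcal{A}_s$, no spontaneous messages or service completions occur in $(0, T_{c+1})$, so memory changes only via $f_3$ at the arrival times $T_1, \ldots, T_{k-1}$, and by the induction hypothesis every server-ID argument of $f_3$ at each such arrival $j$ lies in $R_j \cup R'_j \cup {\bf S}_j^{set} \cup B \subseteq I \cup B$. Consequently, for any $\sigma \in \Sigma_0$ that additionally fixes $B$ pointwise, the choice of $\sigma_M$ fixing $M(0)$ also fixes $M(T_j)$ for every $j \leq k-1$, by iterated application of Part 3 of Assumption \ref{def:symmetry}. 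Parts 1 and 2 of Assumption \ref{def:symmetry} then give $\sigma$-invariance of the conditional probability vectors that underlie the definitions of $R_{k,i}$ and $R'_k$ at the $k$-th arrival. A sub-induction on $i$, combined with a permutation-counting argument in the style of the proof of Proposition \ref{prop:symSampling}, then shows that any non-busy element of these minimal distinguished sets lying outside $I$ would force the conditional probability vector to admit more than $n^c$ distinct permuted versions, contradicting the memory-size bound; thus $(R_k \cup R'_k) \setminus B \subseteq I$, closing the induction.

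The main technical obstacle is the consistent-choice issue for $\sigma_M$: Assumption \ref{def:symmetry} guarantees only existence, not uniqueness, of $\sigma_M$ for each $\sigma$, so one must commit to a single $\sigma_M$ (the one fixing $M(0)$) per $\sigma \in \Sigma_0 \cap S_{B^c}$ and verify inductively that this same $\sigma_M$ fixes every subsequent $M(T_j)$. The structural restriction imposed by $\mathcal{H}_{c+1}^+$ --- confining all arguments of $f_3$ to lie in $I \cup B$ --- is precisely what makes this propagation possible. A secondary subtlety is that the final counting argument requires checking that the group $\Sigma_0 \cap S_{B^c}$ acts richly enough on $(I \cup B)^c$ to force uniformity of the conditional probability there outside a set of size at most $c$; this follows from another orbit-stabilizer computation using the lower bound on $|\Sigma_0|$ in the spirit of the proof of Proposition \ref{prop:symSampling}.
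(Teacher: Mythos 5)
Your high-level idea --- counting permutations, using the memory-size bound to limit the number of ``distinguished'' servers, and propagating symmetry through the arrivals via part~3 of Assumption~\ref{def:symmetry} --- is in the right spirit, but the strengthened inductive claim you aim for is simply false, and the proof does not go through.

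The problem is that you conflate ``setwise invariant under $\Sigma_0$'' with ``contained in the pointwise fixed set $I$.'' What the propagation argument (together with Claim~\ref{claim:niceR}) actually yields is that, for $\sigma\in\Sigma_0$ with $\sigma$ also fixing $B$ pointwise, the set $R_{k,i}$ is mapped to itself \emph{as a set}; it does \emph{not} yield $R_{k,i}\setminus B\subseteq I$. A concrete counterexample: let the memory state be an \emph{unordered} pair of server indices, so $|\mathcal{M}_n|=\binom{n}{2}$ and $c=2$ suffices, let $f_1$ return the two stored servers in a random order, and let $f_3$ leave the memory unchanged. This policy is symmetric with $\sigma_M(\{i,j\})=\{\sigma(i),\sigma(j)\}$. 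If $M(0)=\{1,2\}$, then $\Sigma_0$ contains the transposition $(1\,2)$, so $1,2\notin I$; yet $R_{1,1}=\{1,2\}$, so if $\{1,2\}\cap B=\emptyset$ then $R_{1,1}\setminus B\not\subseteq I$. The proposition's conclusion $|\overline R\setminus B|\le c$ still holds here ($|\overline R\setminus B|=2\le c$), but your intermediate claim fails. The final ``permutation-counting'' step you invoke to force $R_{k,i}\setminus B\subseteq I$ is exactly the count in Proposition~\ref{prop:symSampling}, which only gives $|R_{k,i}|\le c$; it cannot pin the elements of $R_{k,i}$ to the fixed set $I$, since a $c$-element distinguished set that $\Sigma_0$ permutes among itself incurs no extra permuted probability vectors. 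A secondary issue: the orbit--stabilizer step requires treating the relation ``$\sigma_M$ is a valid companion of $\sigma$'' as coming from a group action on $\mathcal{M}_n$; this is true (the set of valid pairs $(\sigma,\sigma_M)$ forms a subgroup of $S_n\times S_{\mathcal{M}_n}$ and projects onto $S_n$), but it is not immediate from Assumption~\ref{def:symmetry} and needs to be argued.

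The paper avoids these problems by counting something different. Rather than seeking a fixed containing set, it considers permutations $\sigma$ that preserve the ordering of $\overline R\setminus B$, fix $(\overline R\cap B)\cup F$ pointwise, and map $\overline R\setminus B$ to a \emph{disjoint} set. Lemma~\ref{lem:finalLemma} shows that for such $\sigma,\tau$, equal initial memory images $\sigma_M(m(0))=\tau_M(m(0))$ force equal images $\sigma(\overline R)=\tau(\overline R)$, so the number of distinct images $\sigma(\overline R\setminus B)$ is at most $n^c$. But this number is also at least $\binom{n-O(1)}{|\overline R\setminus B|}$, which exceeds $n^c$ whenever $|\overline R\setminus B|>c$ --- a contradiction by Lemma~\ref{lem:chooseBound}. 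This ``count the images of $\overline R\setminus B$ itself'' argument, rather than ``show $\overline R\setminus B$ sits inside a small fixed set,'' is the step your proposal is missing.
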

\begin{proof}
Let us fix a realization $\omega\in\mathcal{H}^+_{c+1}$. We will upper bound the number of distinct images of the set $\overline{R}\backslash B$ under permutations of the set $\mathcal{N}_n$ of servers, which will lead to an upper bound on the cardinality of the set itself. In order to simplify notation, we will suppress the notational dependence on $\omega$ of all random variables for the rest of this proof.

We introduce a subset of the set of all possible permutations of $\mathcal{N}_n$,   with this subset being rich enough to lead to the desired bound. Towards this goal, we define the set
\begin{equation}
 F\triangleq \bigcup_{k=1}^{c+1} \left( \bigcup_{i=1}^{|{\bf S}_k|} \Big[\big\{({\bf S}_k)_i\big\}\backslash R_{k,i}\Big] \cup \Big[\{D_k\}\backslash \big(R'_k\cup {\bf S}_k^{set}\big)\Big] \right).
\end{equation}
This is the set of servers that were sampled, or that were chosen as the destination for a job, which were not in the distinguished sets $R_{k,i}$, or $R_k'\cup {\bf S}_k^{set}$, respectively.

Using our assumption $\omega\in \mathcal{H}_{c+1}^+$ and the definition of $\mathcal{H}_{c+1}^+$, we have
\[ \bigcup_{k=1}^{c+1} \bigcup_{i=1}^{|{\bf S}_k|} \big\{({\bf S}_k)_i\big\}\backslash R_{k,i} \subset B, \quad \text{and} \quad \bigcup_{k=1}^{c+1} \{D_k\} \backslash \big(R'_k\cup {\bf S}_k^{set}\big) \subset B. \]
As a result, we have $F\subset B$, and thus
\begin{equation}\label{eq:disjoint}
 \big(\overline{R}\backslash B\big) \cap F =\emptyset.
\end{equation}

Let $\Sigma$ be the set of permutations $\sigma$ of the server set $\mathcal{N}_n$ that:
\begin{itemize}
\item [(i)] preserve the ordering of $\overline{R}\backslash B$ in the sense defined in Section \ref{sec:notation},
\item [(ii)] fix the set $\big(\overline{R}\cap B\big)\cup F$, and
\item [(iii)] satisfy  $\sigma\big(\overline{R}\backslash B\big)\cap \big(\overline{R}\backslash B\big)=\emptyset$.
\end{itemize}
Consider two permutations $\sigma,\tau\in\Sigma$ such that $\sigma\big(\overline{R}\backslash B\big)=\tau\big(\overline{R}\backslash B\big)$. Then, the fact that $\sigma$ and $\tau$ both preserve the order of $\overline{R}\backslash B$ implies that $\sigma(i)=\tau(i)$, for all $i\in \overline{R}\backslash B$.

\begin{lemma}\label{lem:finalLemma}
  Let $\sigma,\tau\in\Sigma$, and let $\sigma_M$ and $\tau_M$, respectively, be associated permutations of the memory states as specified in Assumption \ref{def:symmetry} (Symmetry). Let $m(0)$ be the initial memory state, at time $0$. If $\sigma_M\big(m(0)\big)=\tau_M\big(m(0)\big)$, then $\sigma\big(\overline{R}\big) = \tau\big(\overline{R}\big)$.
\end{lemma}

Loosely speaking, Lemma \ref{lem:finalLemma} asserts that for the given sample path, permutations $\sigma,\tau$ in $\Sigma$ that lead to different sets $\overline{R}$ of distinguished servers must also lead (through $\sigma_M$ and $\tau_M$) to different initial memory states. The proof is an elementary consequence of our symmetry assumption on the underlying dynamics. However, it is tedious and is deferred to Appendix \ref{app:lastLemma}.\\

By Lemma \ref{lem:finalLemma}, and for $\sigma\in\Sigma$, distinct  images $\sigma(\overline{R})$  must correspond to distinct memory states $\sigma_M(m(0))$. Since the number of different memory states is upper bounded by $n^c$, this implies that
\begin{equation*}
  \Big|\big\{\sigma(\overline{R}) : \sigma \in\Sigma\big\}\Big| \leq n^c.
\end{equation*}
Furthermore, since every $\sigma\in\Sigma$ fixes the set $\overline{R}\cap B$, we have
\begin{equation}\label{eq:memoryBound}
  \Big|\big\{\sigma\big(\overline{R}\backslash B\big) : \sigma \in\Sigma\big\}\Big| = \Big|\big\{\sigma(\overline{R}) : \sigma \in \Sigma \big\}\Big| \leq n^c.
\end{equation}

Recall now that the only restrictions on the image $\sigma\big(\overline{R}\backslash B\big)$ under permutations in $\sigma\in\Sigma$ is that the set $\big(\overline{R}\cap B\big) \cup F$ is fixed, and that $\sigma\big(\overline{R}\backslash B\big)\cap \big(\overline{R}\backslash B\big)=\emptyset$. This implies that $\sigma\big(\overline{R}\backslash B\big)$ can be any set of the same cardinality within $\big(\overline{R}\cup F\big)^c$. It follows that
\begin{equation}\label{eq:permutationBound}
 \Big|\big\{\sigma\big(\overline{R}\backslash B\big) : \sigma \in\Sigma\big\}\Big| \geq {n - |\overline{R} \cup F| \choose |\overline{R}\backslash B|}.
\end{equation}
Recall also that under the event $\mathcal{H}_{c+1}^+$ we must have $\big|{\bf S}_k\big|\leq \xi_k$, for $k=1,\dots,c+1$. Thus $\big|F\big|\leq \xi_1+\dots+\xi_{c+1}+c+1\triangleq f$, and using Equation \eqref{eq:RfirstBound}, $|\overline{R}| \leq c(\xi_1+\dots+\xi_{c+1})+c+1 \triangleq  \theta$. Combining these two upper bounds, we obtain
\begin{equation*}
  {n - |\overline{R} \cup F| \choose |\overline{R}\backslash B|} \geq {n-(f+\theta) \choose |\overline{R}\backslash B|}.
\end{equation*}
Combining this with Equations \eqref{eq:memoryBound} and  \eqref{eq:permutationBound}, we obtain the inequality
\begin{equation}\label{eq:lastBound}
  n^c \geq {n-(f+\theta) \choose |\overline{R}\backslash B|}.
\end{equation}
Finally, using the bound $\big|\overline{R}\big|\leq \theta$, and applying Lemma \ref{lem:chooseBound}, we conclude that in order for this equation to hold for all $n$ large enough, we must have $\big|\overline{R}\backslash B\big|\leq c$.
\end{proof}

\subsection{Completing the proof} \label{s:compl}
We are now ready to complete the proof, by arguing that at least one of the first $c+1$  arrivals must be sent to a server that is either known to be busy or to a server on which no information is available, and therefore has positive probability of being busy.

Recall that for any fixed sample path in $\mathcal{H}_{c+1}^+$, we have (cf. Equation \eqref{eq:hkplus})
\[ \big\{D_1,\dots,D_{c+1}\big\} \subset B \cup  \bigcup_{k=1}^{c+1}
\big( {\bf S}_k^{set} \cup  R'_k \big). \]
Furthermore the event $\mathcal{H}_{c+1}^+$ implies that $({\bf S}_k)_i \in R_{k,i} \cup B$, for $i=1,\dots,|{\bf S}_k|$ and $k=1,\dots,c+1$. Therefore,
\begin{align}
 {\bf S}_k^{set} &\subset \bigcup_{i=1}^{|{\bf S}_k|} R_{k,i} \cup B = R_k \cup B,
\end{align}
for $k=1,\dots,c+1$. It follows that
\begin{align}
 \big\{D_1,\dots,D_{c+1}\big\} &\subset B \cup  \bigcup_{k=1}^{c+1} \big( {\bf S}_k^{set} \cup  R'_k \big) \\
 &\subset B \cup  \bigcup_{k=1}^{c+1} \big( R_k \cup R'_k \big) \\
 &= B \cup \overline{R}.
\end{align}
Moreover, Proposition \ref{prop:cardinalityBound} states that $\big| \overline{R} \backslash B \big| \leq c$. Thus, either (a) there exists $k$ such that $D_k\in B$, or (b) $D_i\in \overline{R}\backslash B$ for $i=1,\dots,c+1,$ and hence there exists a pair $k,l$, with $k<l$, such that $D_k=D_l$. We will now show that in both cases, the queueing delay is at least $\gamma$.

Let $L_k$ be the queueing delay of the $k$-th arrival. Recall that for $i\in B$, we have ${\bf Q}_{i,1}(0)>2\gamma$. Then, for case (a), with $D_k=i\in B$ we have
\begin{align*}
  L_k&=\big({\bf Q}_{i,1}(0)-T_k\big)^+ \geq 2\gamma - \frac{\gamma}{n} \geq \gamma > 0.
\end{align*}
On the other hand, for case (b), we have
\begin{align*}
L_l&\geq \big[W_k - (T_l - T_k)\big]^+ \geq 2\gamma - \left(\frac{\gamma}{n} - 0\right) \geq \gamma > 0.
\end{align*}
In both cases, we have
\[ L_1+\cdots+L_{c+1}\geq \gamma. \]
Since this is true for every sample path in $\mathcal{H}_{c+1}^+$, we obtain
\begin{equation}\label{eq:last}
 \mathbb{E}\big[L_1+\cdots+L_{c+1} \mid \mathcal{H}_{c+1}^+ \big]\geq \gamma.
\end{equation}

Finally, recall that the process $({\bf Q}(t),M(t),Z(t))_{t\geq 0}$ is stationary, with invariant probability measure $\pi_n$. Then, setting $t=\gamma/n$ in Equation \eqref{eq:delayDefinition}, we obtain
\begin{align*}
   \mathbb{E}_{\pi_n}^0\left[ L_0 \right] &= \frac{1}{\lambda \gamma} \mathbb{E}\left[ \sum\limits_{j=1}^{A_n\left(\frac{\gamma}{n}\right)} L_j \right] \\
   &\geq \frac{1}{\lambda \gamma} \mathbb{E}\left[\left. \sum\limits_{j=1}^{A_n\left(\frac{\gamma}{n}\right)} L_j \,\right|\, \mathcal{H}_{c+1}^+ \right] \mathbb{P}\big(\mathcal{H}_{c+1}^+\big) \\
   &\geq \frac{1}{\lambda \gamma} \mathbb{E}\left[\left. L_1+\cdots+L_{c+1} \,\right|\, \mathcal{H}_{c+1}^+ \right] \mathbb{P}\big(\mathcal{H}_{c+1}^+\big),
\end{align*}
where the last inequality comes from the fact that $\mathcal{H}_{c+1}^+\subset\big\{A_n(\gamma/n)\geq c+1\big\}$. Combining this with Equation \eqref{eq:last} and the fact that $\mathbb{P}\big(\mathcal{H}_{c+1}^+\big)\geq \alpha_{c+1}^+ > 0$, we obtain
\[ \mathbb{E}_{\pi_n}^0\left[ L_0 \right] \geq \frac{\alpha_{c+1}^+}{\lambda}>0. \]
As the constant in the lower bound does not depend on $n$, this completes the proof of the theorem.

\section{Conclusions and future work}\label{sec:conclusions}
We showed that when we have a limited amount of memory and a modest budget of messages per unit of time, and under a symmetry assumption, all dispatching policies result in queueing delay that is uniformly bounded away from zero. In particular, this implies that the queueing delay does not vanish as the system size increases.

Our result complements the results in \cite{positiveResult}, in which the authors showed that if we have a little more of either resource, i.e., if the number of memory bits or the message rate grows faster with $n$, then there exists a symmetric policy that drives the queueing delay to zero as $n\to\infty$. Consequently, we now have necessary and sufficient conditions on the amount of resources available to a central dispatcher, in order to achieve a vanishing queueing delay as the system size increases.

There are several interesting directions for future research. For example:
\begin{itemize}
\item [(i)] All the policies in the literature that achieve a vanishing queueing delay need a message rate at least equal to the arrival rate $\lambda n$. We conjecture that this is not a necessary condition for a policy to have a vanishing queueing delay, as long as it has access to the incoming job sizes.
\item [(ii)] We have focused on a system with homogeneous servers. For the case of nonhomogeneous servers, even stability can become an issue, and there are interesting tradeoffs between the resources used and the stability region. In this setting, we expect a result similar to our lower bound for queueing delay, stating that a resource constrained policy cannot be stable for every stabilizable system.
\end{itemize}

\appendix

\section{Comparison with a more restrictive symmetry assumption}
\label{app:restr}
In this appendix we explain why the stronger symmetry assumption
\begin{equation} \label{eq:appa}
\sigma\big(f_1(m,w,u)\big) = f_1\big(\sigma_M(m),w,u\big),\qquad \forall u\in[0,1],
\end{equation}
would be unduly restrictive.

Consider a policy that samples a fixed number $d$ of servers, uniformly at random (regardless of the memory state and of the incoming job size), and that satisfies this stronger symmetry assumption. Then, $f_1(m,w,u)$ is a vector of dimension $d$, for all $m\in\mathcal{M}_n$, $w\in\mathbb{R}_+$, and $u\in[0,1]$. Let $\sigma,\tau$ be a pair of permutations such that $\sigma(f_1(m,w,u))\neq \tau(f_1(m,w,u))$. The stronger symmetry assumption in Equation \eqref{eq:appa} implies that there exists a pair of associated permutations $\sigma_M,\tau_M$ of the memory states such that
\[ f_1\big(\sigma_M(m),w,u\big)=\sigma\big(f_1(m,w,u)\big)\neq \tau\big(f_1(m,w,u)\big)=f_1\big(\tau_M(m),w,u\big). \]
It follows that $\sigma_M(m)\neq \tau_M(m)$, and thus there must be at least as many memory states as the number of different vectors of dimension $d$ with different entries. There are ${n \choose d} d!$ such vectors, and therefore a large memory would be required to implement such a uniform sampling policy if Equation \ref{eq:appa} were to be enforced.

On the other hand, the symmetry assumption that we have adopted in this paper
 only requires equality in distribution, and uniform sampling can be achieved with only one memory state (i.e., with no bits of memory). Indeed, since the sampling of servers is done uniformly at random, we have
\[ f_1(m,w,U) \overset{d}{=} \sigma\big(f_1(m,w,U)\big), \]
for all permutations $\sigma$.

This example shows that the symmetry assumption that we have adopted can be substantially weaker (and thus easier to satisfy), and allows  small-memory implementation of simple natural policies.

\section{A combinatorial inequality } \label{app:delay}
We record here an elementary fact.

\begin{lemma}\label{lem:chooseBound}
Let us fix  positive integer constants $a$ and $c$. Suppose that $b$ satisfies
\begin{equation}\label{eq:l1}
{n-a \choose b} \leq n^c.
\end{equation}
As long as $n$ is large enough, we must have $b\leq c$ or $b\geq n-a-c$. \end{lemma}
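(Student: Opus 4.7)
The plan is to argue by contrapositive: assume $c < b < n-a-c$ and show that the binomial coefficient in \eqref{eq:l1} eventually exceeds $n^c$, contradicting the hypothesis. The key structural fact I will exploit is the unimodality and symmetry of $k\mapsto\binom{n-a}{k}$ around $k=(n-a)/2$.

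First I would reduce to the case $b\leq (n-a)/2$. Indeed, $\binom{n-a}{b}=\binom{n-a}{n-a-b}$, and $b'\triangleq n-a-b$ satisfies $c < b' < n-a-c$ whenever $b$ does; moreover, one of $b,b'$ must be $\leq (n-a)/2$. So it is enough to derive a contradiction under the additional assumption $b\leq (n-a)/2$.

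Next I would use unimodality: for $k\leq (n-a)/2$, the sequence $\binom{n-a}{k}$ is nondecreasing in $k$. Since $c+1 \leq b \leq (n-a)/2$ (the upper bound using the reduction, and for the lower bound note that $b>c$ and both are integers), this gives
\[
\binom{n-a}{b}\;\geq\;\binom{n-a}{c+1}.
\]
Then I would bound $\binom{n-a}{c+1}$ from below by a polynomial of degree $c+1$ in $n$ via the standard estimate
\[
\binom{n-a}{c+1}\;\geq\;\frac{(n-a-c)^{c+1}}{(c+1)!},
\]
which is valid for all $n\geq a+c+1$. Since $a$ and $c$ are fixed, the right-hand side is $\Theta(n^{c+1})$ as $n\to\infty$, and therefore strictly exceeds $n^c$ for all sufficiently large $n$. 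This contradicts \eqref{eq:l1}, completing the argument.

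There is essentially no obstacle here; the only thing to be slightly careful about is the reduction step that justifies the WLOG assumption $b\leq (n-a)/2$, which is what lets us apply the unimodality inequality. Also, one should verify that the ``$n$ large enough'' threshold can be chosen depending only on $a$ and $c$, which is clear from the derivation since these are the only parameters entering the estimates.
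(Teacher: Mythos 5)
Your proof is correct and follows essentially the same route as the paper's: both arguments rely on the unimodality of $k\mapsto\binom{n-a}{k}$ to reduce the midrange case to $b=c+1$, on the observation that $\binom{n-a}{c+1}$ grows as a degree-$(c+1)$ polynomial in $n$, and on the symmetry $\binom{n-a}{b}=\binom{n-a}{n-a-b}$ to cover the upper half of the range. Your version just makes the lower bound $\binom{n-a}{c+1}\geq (n-a-c)^{c+1}/(c+1)!$ explicit where the paper simply invokes polynomial growth.
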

\begin{proof}
Suppose that $b=c+1$. The quantity ${n-a \choose c+1}$ is a polynomial in $n$ of degree $c+1$ and therefore, when $n$ is large, \eqref{eq:l1} cannot hold.
In the range $c+1\leq b \leq (n-a)/2$, the quantity ${n-a \choose b}$ increases with $b$, and hence \eqref{eq:l1} cannot hold either.
Using the symmetry of the binomial coefficient, a similar argument is used to exclude the possibility that $(n-a)/2 \leq b \leq n-a-c-1$.
\end{proof}

\section{Proof of lemma 5.6}\label{app:inductionStep2}

In order to simplify notation, we introduce the following. For any $m\in\mathcal{M}_n$, $w\in\mathbb{R}_+$, and $b\in\mathcal{P}(\mathcal{N}_n)$, we define the event
\[ \mathcal{A}_{m,w,b}\triangleq \big\{ M(T_k^-)=m, B=b, W_k=w \big\}, \]
and we let $\mathbb{P}_{m,w,b}$ be the conditional probability measure
\[ \mathbb{P}_{m,w,b}(\,\cdot\,)\triangleq \mathbb{P}\big(\,\cdot \mid \mathcal{A}_{m,w,b}\big). \]

Let us fix some $\ell\leq\xi_k$ and some $i\leq\ell$. We have
\begin{align*}
  &\mathbb{P}\left( H_{k,i} \,\, \left| \,\, \mathcal{H}_{k-1}^+ \cap \{ |{\bf S}_k|=\ell \} \cap \bigcap_{j=1}^{i-1} H_{k,j} \right. \right) \\
  & \qquad = \int\limits_{m,w,b}  \mathbb{P}_{m,w,b}\left( H_{k,i} \,\left|\, \mathcal{H}_{k-1}^+ \cap \big\{ |{\bf S}_k|=\ell \big\} \cap \bigcap_{j=1}^{i-1} H_{k,j} \right. \right) \\
  &\qquad\qquad\qquad\qquad\qquad\quad \cdot \text{d}\mathbb{P}\left( \mathcal{A}_{m,w,b} \,\, \left| \,\, \mathcal{H}_{k-1}^+ \cap \{ |{\bf S}_k|=\ell \} \cap \bigcap_{j=1}^{i-1} H_{k,j} \right. \right).
\end{align*}
Moreover,
\begin{align*}
   &\mathbb{P}_{m,w,b}\left( H_{k,i} \,\left|\, \mathcal{H}_{k-1}^+ \cap \big\{ |{\bf S}_k|=\ell \big\} \cap \bigcap_{j=1}^{i-1} H_{k,j} \right. \right) \\
   &\qquad = \sum\limits_{{\bf s}} \mathbb{P}_{m,w,b}\left( H_{k,i} \,\left|\, \mathcal{H}_{k-1}^+ \cap \big\{ |{\bf S}_k|=\ell \big\} \cap \bigcap_{j=1}^{i-1} \{({\bf S}_k)_j={\bf s}_j \} \right. \right) \\
   &\qquad\qquad\qquad \cdot \mathbb{P}_{m,w,b}\left( \bigcap_{j=1}^{i-1}  \{({\bf S}_k)_j={\bf s}_j \} \,\left|\, \mathcal{H}_{k-1}^+ \cap \big\{ |{\bf S}_k|=\ell \big\} \cap \bigcap_{j=1}^{i-1} H_{k,j} \right. \right),
\end{align*}
where the sum is over all ($i-1$)-dimensional vectors $\bf s$ whose components are distinct indices of servers, and such that the conditional probabilities above are well-defined.

It is not hard to see that the desired result follows immediately once we establish the following claim.

\begin{claim}
 For all $n$ large enough, we have
\begin{equation}\label{eq:claim} \mathbb{P}_{m,w,b}\left( H_{k,i} \,\left|\, \mathcal{H}_{k-1}^+ \cap\, \big\{ |{\bf S}_k|=\ell \big\} \cap \bigcap_{j=1}^{i-1} \{ ({\bf S}_k)_j={\bf s}_j \} \right. \right) \geq  \frac{\gamma}{2},
\end{equation}
for all $(m,w,b,{\bf s})$ such that the conditional probability above is well-defined.
\end{claim}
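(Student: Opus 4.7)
The approach is to combine Proposition \ref{prop:symSampling} (near-uniformity of sampling outside a small distinguished set) with the lower bound $|B|\geq \gamma n$ coming from $\mathcal{A}_b\subset \mathcal{H}_{k-1}^+$, so that a uniformly chosen server lands inside the busy set $B$ with probability close to $\gamma$.

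First I would reduce the conditional probability in \eqref{eq:claim} to the setting of Proposition \ref{prop:symSampling}. Since ${\bf S}_k=f_1(M(T_k^-),W_k,U_k)$ and the randomization variable $U_k$ is independent of all fundamental processes that drive $\mathcal{H}_{k-1}^+$ and $B$, conditioning on $\mathcal{A}_{m,w,b}\cap \mathcal{H}_{k-1}^+\cap\{|{\bf S}_k|=\ell\}\cap \bigcap_{j<i}\{({\bf S}_k)_j={\bf s}_j\}$ amounts to restricting $U_k$ to the event $B(m,w;{\bf s},\ell)$ defined in the proposition, with all other conditioning becoming irrelevant. Thus the conditional distribution of $({\bf S}_k)_i$ coincides with that of $f_1(m,w,U)_{i}$ given $B(m,w;{\bf s},\ell)$.

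Second, Proposition \ref{prop:symSampling} supplies a set $R_{k,i}\subset\mathcal{N}_n\setminus {\bf s}^{set}$ with $|R_{k,i}|\leq c$ on which the common value $p\triangleq\mathbb{P}(f_1(m,w,U)_i = j \mid B(m,w;{\bf s},\ell))$ is attained for every $j\notin R_{k,i}\cup {\bf s}^{set}$. Let $\alpha$ denote the total conditional mass assigned to $R_{k,i}$; since distinct sampling forces zero mass on ${\bf s}^{set}$, we get $p(n-|R_{k,i}|-|{\bf s}|)=1-\alpha$. Therefore
\[ \mathbb{P}\big(H_{k,i}\mid \cdots\big) \;\geq\; \alpha \;+\; p\cdot\big|B\setminus(R_{k,i}\cup {\bf s}^{set})\big| \;\geq\; \alpha + (1-\alpha)\cdot\frac{|b|-c-(\ell-1)}{n-|R_{k,i}|-(i-1)}. \]
On $\mathcal{A}_{m,w,b}$ the event $\mathcal{A}_b$ forces $|b|=N_b\geq \gamma n$, while $|R_{k,i}|\leq c$ and $\ell\leq \xi_k$ are constants; hence the ratio tends to $\gamma$ as $n\to\infty$ and is at least $2\gamma/3$ for $n$ large enough. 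Since $\alpha+(1-\alpha)\beta\geq \beta$ whenever $\beta\in[0,1]$, we conclude $\mathbb{P}(H_{k,i}\mid\cdots)\geq 2\gamma/3>\gamma/2$, as required.

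The main obstacle I anticipate is the first step: carefully verifying that, on the product probability space of Section \ref{s:fund}, conditioning on the entire past event $\mathcal{H}_{k-1}^+\cap \mathcal{A}_{m,w,b}$ leaves $U_k$ uniform on $[0,1]$, so that the further conditioning on the partial-sample event indeed reduces the law of $({\bf S}_k)_i$ to the purely $U_k$-driven conditional distribution to which Proposition \ref{prop:symSampling} applies. Once this identification is in place, the remaining estimate is the direct counting bound above, using only $|B|\geq \gamma n$ and the boundedness (in $n$) of $|R_{k,i}|$ and $|{\bf s}|$.
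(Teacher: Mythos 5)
Your proof is correct and takes essentially the same approach as the paper: condition on $\mathcal{A}_{m,w,b}$ so that only $U_k$ remains random, apply Proposition \ref{prop:symSampling} to get near-uniformity of $({\bf S}_k)_i$ outside a set $r_{k,i}$ of size at most $c$ (and outside $\{{\bf s}_1,\dots,{\bf s}_{i-1}\}$, which carries zero mass since entries of ${\bf S}_k$ are distinct), and then use $|b|\geq\gamma n$ to conclude that the mass on $b\backslash r_{k,i}$ is at least $\tfrac{\gamma}{2}$ times the mass not on $r_{k,i}$. Your $\alpha + (1-\alpha)\beta \geq \beta$ step is just a slight repackaging of the paper's final two-term bound, and the conditional-independence point you flag as the ``main obstacle'' is handled exactly as the paper does, via independence of the randomization variable $U_k$ from the past.
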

\begin{proof}
Let us fix some $(m,w,b,{\bf s})$. Since $\mathcal{H}_{k-1}^+$ implies $|B|\geq \gamma n$, we have
\begin{equation}\label{eq:bigB}
  |b|\geq \gamma n.
\end{equation}
On the other hand, recall that
\[ H_{k,i} = \left\{ ({\bf S}_k)_i \in R_{k,i}\cup B \right\}, \]
where
\[ {\bf S}_k=f_1\big(M(T_k^-),W_k,U_k\big), \]
and $R_{k,i}$ is equal to the set
\[ R\Big(M(T_k^-),W_k,\big(({\bf S}_k)_1,\dots,({\bf S}_k)_{i-1}\big),|{\bf S}_k|\Big) \]
defined in Proposition \ref{prop:symSampling}, whenever the proposition applies. Otherwise, we have $R_{k,j}=\emptyset$. In any case, $R_{k,j}$ is a deterministic function of the same random variables. Then, conditioned on $M(T_k^-)=m$, $W_k=w$, $B=b$, $\big(({\bf S}_k)_1,\dots,({\bf S}_k)_{j-1}\big)={\bf s}$, and $|{\bf S}_k|=\ell$, we have
\[ H_{k,i} = \Big\{ \Big(f_1\big(m,w,U_k\big)\Big)_i \in r_{k,i}\cup b \Big\}, \]
where $r_{k,i}$ denotes the corresponding realization of the random set $R_{k,i}$. Note that the only randomness left in this event comes from $U_k$, which is a randomization random variable that is chosen independent from all the events prior to time $T_k^-$. It follows that $H_{k,i}$ is conditionally independent from $\mathcal{H}_{k-1}^+$, and thus
\begin{align*}
 &\mathbb{P}_{m,w,b}\left( H_{k,i} \,\left|\,  \mathcal{H}_{k-1}^+ \cap \big\{ |{\bf S}_k|=\ell \big\} \cap \bigcap_{j=1}^{i-1} \{ ({\bf S}_k)_j={\bf s}_j \} \right. \right) \nonumber \\
 &\qquad\qquad\qquad\qquad\qquad = \mathbb{P}_{m,w,b}\left( H_{k,i} \,\left|\, \big\{ |{\bf S}_k|=\ell \big\} \cap \bigcap_{j=1}^{i-1} \{ ({\bf S}_k)_j={\bf s}_j \} \right. \right).
\end{align*}

We now define the event
$G_{k,{\bf s},i,\ell}$ to be
\[ G_{k,{\bf s},i,\ell} \triangleq \big\{ |{\bf S}_k|=\ell \big\} \cap \bigcap_{j=1}^{i-1} \{ ({\bf S}_k)_j={\bf s}_j \}. \]
We are interested in bounding $\mathbb{P}_{m,w,b}\left( H_{k,i} \mid G_{k,{\bf s},i,\ell}\right)$, which we decompose into two terms:
\begin{align}
 \mathbb{P}_{m,w,b}\big( H_{k,i} \,\big|\, G_{k,{\bf s},i,\ell} \big)
 & = \mathbb{P}_{m,w,b}\big( ({\bf S}_k)_i \in r_{k,i}\cup b \,\big|\, G_{k,{\bf s},i,\ell} \big) \nonumber \\
 & = \mathbb{P}_{m,w,b}\big( ({\bf S}_k)_i \in r_{k,i} \,\big|\, G_{k,{\bf s},i,\ell} \big) \nonumber \\
 &\qquad\qquad\qquad\qquad\quad + \mathbb{P}_{m,w,b}\big( ({\bf S}_k)_i \in b\backslash r_{k,i} \,\big|\, G_{k,{\bf s},i,\ell} \big). \label{eq:split}
\end{align}
Since the conditional probability measure $\mathbb{P}_{m,w,b}(\,\cdot \mid G_{k,{\bf s},i,\ell})$ is well-defined, and since $\ell\leq\xi_k$ and $\xi_l\leq\sqrt{n}$ for all $n$ large enough, Proposition \ref{prop:symSampling} applies and yields
\begin{align}
&\mathbb{P}_{m,w,b}\big( ({\bf S}_k)_i = s \,\big|\, G_{k,{\bf s},i,\ell} \big)
= \mathbb{P}_{m,w,b}\big( ({\bf S}_k)_i = s' \,\big|\, G_{k,{\bf s},i,\ell} \big), \label{eq:equiProb}
\end{align}
for all $s,s'\notin r_{k,i} \cup \{{\bf s}_1,\dots,{\bf s}_{i-1}\}$. As a result,
\begin{align*}
  &\mathbb{P}_{m,w,b}\big( ({\bf S}_k)_i \in b\backslash r_{k,i} \,\big|\, G_{k,{\bf s},i,\ell} \big) \\
  &\quad\,\, \geq \mathbb{P}_{m,w,b}\big( ({\bf S}_k)_i \in b\backslash (r_{k,i}\cup \{{\bf s}_1,\dots,{\bf s}_{i-1}\} ) \,\big|\, G_{k,{\bf s},i,\ell} \big) \\
  &\quad\,\, =\frac{|b\backslash (r_{k,i} \cup \{{\bf s}_1,\dots,{\bf s}_{i-1}\})|}{n-|r_{k,i} \cup \{{\bf s}_1,\dots,{\bf s}_{i-1}\}|} \mathbb{P}_{m,w,b}\big( ({\bf S}_k)_i \notin r_{k,i}\cup \{{\bf s}_1,\dots,{\bf s}_{i-1}\} \,\big|\, G_{k,{\bf s},i,\ell} \big).
\end{align*}
Moreover, using the facts that $|b|\geq \gamma n$ (Equation \eqref{eq:bigB}), $|r_{k,i}|\leq c$ (Proposition \ref{prop:symSampling}), and $i\leq \ell$, we obtain
\begin{align*}
  &\frac{|b\backslash (r_{k,i} \cup \{{\bf s}_1,\dots,{\bf s}_{i-1}\})|}{n-|r_{k,i} \cup \{{\bf s}_1,\dots,{\bf s}_{i-1}\}|} \,\cdot\,\mathbb{P}_{m,w,b}\big( ({\bf S}_k)_i \notin r_{k,i}\cup \{{\bf s}_1,\dots,{\bf s}_{i-1}\} \,\big|\, G_{k,{\bf s},i,\ell} \big) \\
  &\qquad\qquad\qquad\quad \geq \frac{\gamma n - c - \ell}{n}\,\cdot\, \mathbb{P}_{m,w,b}\big( ({\bf S}_k)_i \notin r_{k,i}\cup \{{\bf s}_1,\dots,{\bf s}_{i-1}\} \,\big|\, G_{k,{\bf s},i,\ell} \big) \\
  &\qquad\qquad\qquad\quad \geq \frac{\gamma}{2} \,\cdot\,\mathbb{P}_{m,w,b}\big( ({\bf S}_k)_i \notin r_{k,i}\cup \{{\bf s}_1,\dots,{\bf s}_{i-1}\} \,\big|\, G_{k,{\bf s},i,\ell} \big),
\end{align*}
when $n$ is large enough. Finally, since the elements of the vector ${\bf S}_k$ are distinct,
\begin{align*}
  &\mathbb{P}_{m,w,b}\big( ({\bf S}_k)_i \notin r_{k,i}\cup \{{\bf s}_1,\dots,{\bf s}_{i-1}\} \,\big|\, G_{k,{\bf s},i,\ell} \big) = \mathbb{P}_{m,w,b}\big( ({\bf S}_k)_i \notin r_{k,i} \,\big|\, G_{k,{\bf s},i,\ell} \big),
\end{align*}
and therefore
\[ \mathbb{P}_{m,w,b}\big( ({\bf S}_k)_i \in b\backslash r_{k,i} \,\big|\, G_{k,{\bf s},i,\ell} \big) \geq \frac{\gamma}{2}\mathbb{P}_{m,w,b}\big( ({\bf S}_k)_i \notin r_{k,i} \,\big|\, G_{k,{\bf s},i,\ell} \big). \]
We now substitute into Equation \eqref{eq:split}, and obtain
\begin{align*}
 &\mathbb{P}_{m,w,b}\big( H_{k,i} \,\big|\, G_{k,{\bf s},i,\ell} \big) \\
 &\qquad\qquad\quad\, \geq \mathbb{P}_{m,w,b}\big( ({\bf S}_k)_i \in r_{k,i} \,\big|\, G_{k,{\bf s},i,\ell} \big) + \frac{\gamma}{2} \mathbb{P}_{m,w,b}\big( ({\bf S}_k)_i \notin r_{k,i} \,\big|\, G_{k,{\bf s},i,\ell} \big) \\
 &\qquad\qquad\quad\, \geq \frac{\gamma}{2},
\end{align*}
for all $n$ large enough.
\end{proof}

\section{Proof of lemma 5.9} \label{app:lastLemma}

We first prove a claim about the set-valued functions $R$ and $R'$ introduced in Propositions \ref{prop:symSampling} and \ref{prop:symDispatching}, respectively.

\begin{claim}\label{claim:niceR}
For every $m\in\mathcal{M}_n$, $w\in\mathbb{R}_+$, ${\bf s}\in\mathcal{S}_n$ with $|{\bf s}|\leq\sqrt{n}$, ${\bf q}\in\mathcal{Q}^{|{\bf s}|}$, and for $\ell=|{\bf s}|+1,\dots,n$, and for every permutation $\sigma$, we have $R\big(\sigma_M(m),w,\sigma({\bf s}),\ell\big)=\sigma\big(R(m,w,{\bf s},\ell)\big)$, and $R'\big(\sigma_M(m),w,\sigma({\bf s}),{\bf q}\big)=\sigma\big(R'(m,w,{\bf s},{\bf q})\big)$.
\end{claim}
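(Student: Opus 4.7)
The plan is to show that both statements follow directly from the symmetry Assumption \ref{def:symmetry} together with the uniqueness part of Propositions \ref{prop:symSampling} and \ref{prop:symDispatching}. The key observation is that when we apply the symmetry relation $\sigma(f_1(m,w,U))\overset{d}{=}f_1(\sigma_M(m),w,U)$, both the conditioning event $B(\cdot,w;\cdot,\ell)$ and the ``next coordinate equals $j$'' event transform in a compatible way under $\sigma$, so that the conditional probability vector on the right-hand side of \eqref{eq:ss} is just a $\sigma$-permutation of the one on the left.

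Concretely, I would first establish the identity
\[
\mathbb{P}\big(f_1(\sigma_M(m),w,U)_{|{\bf s}|+1}=j \,\big|\, B(\sigma_M(m),w;\sigma({\bf s}),\ell)\big)
= \mathbb{P}\big(f_1(m,w,U)_{|{\bf s}|+1}=\sigma^{-1}(j) \,\big|\, B(m,w;{\bf s},\ell)\big),
\]
valid for every $j\notin\sigma({\bf s})^{set}$. The argument is the same line of reasoning used in the proof of Proposition \ref{prop:symSampling}: use $|\sigma({\bf v})|=|{\bf v}|$ and the fact that $\sigma$ is a bijection to rewrite $B(\sigma_M(m),w;\sigma({\bf s}),\ell)$ as the image under $\sigma$ of $B(m,w;{\bf s},\ell)$, and then appeal to Part 1 of Assumption \ref{def:symmetry}. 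In particular, this also yields $\mathbb{P}(B(\sigma_M(m),w;\sigma({\bf s}),\ell))=\mathbb{P}(B(m,w;{\bf s},\ell))$, so both conditional probabilities are well-defined simultaneously (and the claim is trivial otherwise, with both sides empty by convention).

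Writing ${\bf v}_j$ and $\tilde{\bf v}_j$ for the conditional probability vectors appearing on the right and left of the displayed equality, the identity above reads $\tilde{\bf v}_j={\bf v}_{\sigma^{-1}(j)}$. Hence the partition of $\mathcal{N}_n\setminus\sigma({\bf s})^{set}$ into level sets of $\tilde{\bf v}$ is exactly the image under $\sigma$ of the partition of $\mathcal{N}_n\setminus{\bf s}^{set}$ into level sets of ${\bf v}$, with corresponding classes having the same cardinality. In the proof of Proposition \ref{prop:symSampling}, $R(m,w,{\bf s},\ell)$ is characterized (uniquely, since $|{\bf s}|\leq\sqrt{n}$ and $n$ is large) as the complement within $\mathcal{N}_n\setminus{\bf s}^{set}$ of the unique largest such level set $A_d$. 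Consequently the largest level set for $\tilde{\bf v}$ is $\sigma(A_d)$, and
\[
R(\sigma_M(m),w,\sigma({\bf s}),\ell) = (\mathcal{N}_n\setminus\sigma({\bf s})^{set})\setminus\sigma(A_d) = \sigma\big((\mathcal{N}_n\setminus{\bf s}^{set})\setminus A_d\big) = \sigma\big(R(m,w,{\bf s},\ell)\big).
\]

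The statement for $R'$ is proved in exactly the same way, but the situation is slightly simpler because $f_2$ involves no conditioning: using Part 2 of Assumption \ref{def:symmetry}, one gets
\[
\mathbb{P}\big(f_2(\sigma_M(m),w,\sigma({\bf s}),{\bf q},V)=j\big) = \mathbb{P}\big(f_2(m,w,{\bf s},{\bf q},V)=\sigma^{-1}(j)\big),
\]
and the identical level-set / uniqueness argument then yields $R'(\sigma_M(m),w,\sigma({\bf s}),{\bf q})=\sigma(R'(m,w,{\bf s},{\bf q}))$. The only mild subtlety, and the step I expect to need the most care, is the bookkeeping that translates $B(\sigma_M(m),w;\sigma({\bf s}),\ell)$ into the $\sigma$-image of $B(m,w;{\bf s},\ell)$; once that is done cleanly, the remainder is a routine consequence of the uniqueness established in Propositions \ref{prop:symSampling} and \ref{prop:symDispatching}.
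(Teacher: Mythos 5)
Your proposal is correct and follows essentially the same route as the paper's own proof: you use Part~1 (resp.\ Part~2) of the symmetry assumption to show that the conditional probability vector associated with $(\sigma_M(m),\sigma({\bf s}))$ is the $\sigma^{-1}$-reindexing of the one associated with $(m,{\bf s})$, and you then invoke the uniqueness of the minimal-cardinality set $R$ (resp.\ $R'$) established in Proposition~\ref{prop:symSampling} (resp.\ Proposition~\ref{prop:symDispatching}) to conclude. The only cosmetic difference is that you spell out the level-set bookkeeping (identifying the image of the largest class $A_d$), whereas the paper appeals to the uniqueness characterization more directly; the underlying argument is the same.
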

\begin{proof}
 In order to simplify notation, we suppress the dependence on $w$ of the functions $R$, $R'$, and $f_1$ throughout the proof of the lemma.

  Let $U$ be a uniform random variable over $[0,1]$. For every $m\in\mathcal{M}_n$, we define the random vector ${\bf S}(m)=f_1(m,U)$. Recall that $R\big(m,{\bf s},\ell\big)\subset \mathcal{N}_n\backslash {\bf s}^{set}$ is the unique set of minimal cardinality such that
  \begin{align*}
 &\mathbb{P}\left({\bf S}(m)_{|{\bf s}|+1} = j \ \left| \  \big\{ |{\bf S}(m)|=\ell \big\} \cap \bigcap_{i=1}^{|{\bf s}|} \big\{ {\bf S}(m)_i={\bf s}_i \big\} \right. \right) \nonumber \\
  &\qquad\qquad\qquad = \mathbb{P}\left({\bf S}\big(m\big)_{|{\bf s}|+1} = j' \ \left| \ \big\{ |{\bf S}(m)|=\ell \big\} \cap \bigcap_{i=1}^{|{\bf s}|} \big\{ {\bf S}(m)_i={\bf s}_i \big\} \right. \right),
 \end{align*}
for all $j,j' \notin R(m,{\bf s},\ell)\cup {\bf s}^{set}$. It is not hard to see, e.g., by replacing $j,j'$ in the above equality by $\sigma^{-1}(j),\sigma^{-1}(j')\notin R(m,{\bf s},\ell)\cup {\bf s}^{set}$, that
$\sigma\big(R\big(m,{\bf s},\ell\big)\big)\subset \mathcal{N}_n\backslash \sigma({\bf s}^{set})$ is the unique set of minimal cardinality such that
\begin{align*}
 &\mathbb{P}\left( \sigma\big({\bf S}(m)_{|{\bf s}|+1}\big) = j \ \left| \  \big\{ |\sigma({\bf S}(m))|=\ell \big\} \cap \bigcap_{i=1}^{|{\bf s}|} \big\{ \sigma({\bf S}(m)_i)=\sigma({\bf s}_i) \big\} \right. \right) \\
  &\quad = \mathbb{P}\left(\sigma\big({\bf S}(m)_{|{\bf s}|+1}\big) = j' \ \left| \ \big\{ |\sigma({\bf S}(m))|=\ell \big\} \cap \bigcap_{i=1}^{|{\bf s}|} \big\{ \sigma({\bf S}(m)_i)=\sigma({\bf s}_i) \big\} \right. \right),
 \end{align*}
for all $j,j' \notin \sigma\big(R\big(m,{\bf s},\ell\big)\big)\cup \sigma({\bf s}^{set})$. On the other hand, the symmetry assumption states that
\[ \sigma\big({\bf S}(m)\big)\overset{d}{=} {\bf S}\big(\sigma_M(m)\big).  \]
Combining the last two equalities we get that
$\sigma\big(R\big(m,{\bf s},\ell\big)\big)\subset \mathcal{N}_n\backslash \sigma({\bf s}^{set})$ is the unique set of minimal cardinality such that
  \begin{align*}
 &\mathbb{P}\left({\bf S}\big(\sigma_M(m)\big)_{|{\bf s}|+1} = j \ \left| \ \big\{ \big|{\bf S}\big(\sigma_M(m)\big)\big|=\ell \big\} \cap \bigcap_{i=1}^{|{\bf s}|} \big\{ {\bf S}\big(\sigma_M(m)\big)_i=\sigma({\bf s}_i) \big\} \right. \right) \\
  & = \mathbb{P}\left({\bf S}\big(\sigma_M(m)\big)_{|{\bf s}|+1} = j' \ \left| \ \big\{ \big|{\bf S}\big(\sigma_M(m)\big)\big|=\ell \big\} \cap \bigcap_{i=1}^{|{\bf s}|} \big\{ {\bf S}\big(\sigma_M(m)\big)_i=\sigma\big({\bf s}_i\big) \big\} \right. \right),
 \end{align*}
for all $i,j \notin \sigma\big(R\big(m,{\bf s},\ell\big)\big)\cup \sigma({\bf s}^{set})$. However, this is exactly the definition of $R(\sigma_M(m),\sigma({\bf s}),\ell)$ (uniqueness is crucial at this point), so we have
\begin{align*}
 \sigma\big(R\big(m,{\bf s},\ell\big)\big) &= R\big(\sigma_M(m),\sigma({\bf s}),\ell\big).
\end{align*}
The proof of $R'\big(\sigma_M(m),\sigma({\bf s}),{\bf q}\big)=\sigma\big(R'(m,{\bf s},{\bf q})\big)$ is analogous (this time making use of the symmetry of the mapping $f_2$) and is omitted.
\end{proof}

We continue with the proof of Lemma \ref{lem:finalLemma}. Under the event  $\mathcal{H}^+_{c+1}$, we  have $({\bf S}_1)_i\in R_{1,i}\cup B$, for $i=1,\dots,|{\bf S}_1|$. Applying Claim \ref{claim:niceR} and the fact  $m(t_1^-)=m(0)$, which implies that $\sigma_M\big(m(t_1^-)\big)=\tau_M\big(m(t_1^-)\big)$, we obtain
\begin{align}
 \sigma\big(R_{1,1}\big) &= \sigma\Big(R\big(m(t_1^-),w_1,\emptyset,|{\bf S}_1|\big)\Big) \\
 &= R\Big(\sigma_M\big(m(t_1^-)\big),w_1,\emptyset,|{\bf S}_1|\Big) \\
 &= R\Big(\tau_M\big(m(t_1^-)\big),w_1,\emptyset,|{\bf S}_1|\Big) \\
 &= \tau\Big(R\big(m(t_1^-),w_1,\emptyset,|{\bf S}_1|\big)\Big) \\
 &= \tau\big(R_{1,1}\big). \label{eq:sameR1}
\end{align}


Now recall that $\sigma$ and $\tau$ preserve the order of $\overline{R}\backslash B$ and fix $\overline{R}\cap B$, so in particular they preserve the order of $R_{1,1}\backslash B\subset \overline{R}\backslash B$ and fix $R_{1,1}\cap B\subset \overline{R}\cap B$. Combining this with Equation \eqref{eq:sameR1}, we must have $\sigma(i)=\tau(i)$, for all $i\in R_{1,1}$.
If $({\bf S}_1)_1\in R_{1,1}$, this implies that
\begin{equation}
\label{eq:perm}
 \tau\big(({\bf S}_1)_1\big)=\sigma\big(({\bf S}_1)_1\big).
 \end{equation}
On the other hand, if $({\bf S}_1)_1$ does not belong to $R_{1,1}$, then, from the definition of $F$, we must have $({\bf S}_1)_1\in F$. Since $\sigma$ and $\tau$ fix the set $F$, we conclude that Equation \eqref{eq:perm} must hold in all cases.

Proceeding inductively, and using the same argument, we obtain
\begin{equation}
\sigma\big(R_{1,i}\big)=\tau\big(R_{1,i}\big),
\end{equation}
for $i=1,\dots,|{\bf S}_1|$, and $\sigma(i)=\tau(i)$, for all $i\in {\bf S}_1^{set}$. It follows that $\sigma({\bf S}_1)=\tau({\bf S}_1)$. Combining this with the fact that $\sigma_M\big(m(t_1^-)\big)=\tau_M\big(m(t_1^-)\big)$, and applying Claim \ref{claim:niceR} twice, we obtain
\begin{align}
 \sigma\Big(R'_{1}\Big) &= \sigma\Big(R'\big(m(t_1^-),w_1,{\bf S}_1,{\bf q}_{{\bf S}_1}(t_1^-)\big)\Big) \\
 &= R'\Big(\sigma_M\big(m(t_1^-)\big),w_1,\sigma\big({\bf S}_1\big),{\bf q}_{{\bf S}_1}(t_1^-)\Big) \\
 &= R'\Big(\tau_M\big(m(t_1^-)\big),w_1,\tau\big({\bf S}_1\big),{\bf q}_{{\bf S}_1}(t_1^-)\Big) \\
 &= \tau\Big(R'\big(m(t_1^-),w_1,{\bf S}_1,{\bf q}_{{\bf S}_1}(t_1^-)\big)\Big) \\
 &= \tau\Big(R'_{1}\Big). \label{eq:sameR'1}
\end{align}

Now recall that $\sigma$ and $\tau$ preserve the order of $\overline{R}\backslash B$ and fix $\overline{R}\cap B$,  so in particular they preserve the order of $R'_1\backslash B\subset \overline{R}\backslash B$ and fix $R'_1\cap B \subset \overline{R}\cap B$. Combining this with Equation \eqref{eq:sameR'1}, we must have $\sigma(i)=\tau(i)$, for all $i\in R'_1$. Furthermore, recall that we also have that $\sigma(i)=\tau(i)$, for all $i\in {\bf S}_1^{set}$. If $D_1\in R'_1\cup {\bf S}_1^{set}$, this implies that
\begin{equation}
  \sigma\big(D_1\big) = \tau\big(D_1\big). \label{eq:sameD1}
\end{equation}
On the other hand, if $D_1$ does not belong to $R'_1\cup {\bf S}_1^{set}$, then, from the definition of $F$, we must have $D_1\in F$. Since $\sigma$ and $\tau$ fix the set $F$, we conclude that Equation \eqref{eq:sameD1} must hold in all cases.

We now consider a memory update. Using the symmetry assumption, we have
  \begin{align*}
  \sigma_M\big(m(t_1)\big) &= \sigma_M\Big(f_3\big(m(t_1^-),w_1,{\bf S}_1,{\bf q}_{{\bf S}_1}(t_1^-),D_1\big)\Big) \\
  &= f_3\Big(\sigma_M\big(m(t_1^-)\big),w_1,\sigma\big({\bf S}_1\big),{\bf q}_{{\bf S}_1}(t_1^-),\sigma\big(D_1\big)\Big).
  \end{align*}
  Then, since $\sigma_M(m(t_1^-))=\tau_M(m(t_1^-))$, $\sigma\big({\bf S}_1\big) = \tau\big({\bf S}_1\big)$, and $\tau\big(D_1\big)=\sigma\big(D_1\big)$, we have
  \begin{align*}
  &f_3\Big(\sigma_M\big(m(t_1^-)\big),w_1,\sigma\big({\bf S}_1\big),{\bf q}_{{\bf S}_1}(t_1^-),\sigma\big(D_1\big)\Big) \\
  &\qquad\qquad\qquad\qquad\qquad\qquad =f_3\Big(\tau_M\big(m(t_1^-)\big),w_1,\tau\big({\bf S}_1\big),{\bf q}_{{\bf S}_1}(t_1^-),\tau\big(D_1\big)\Big).
  \end{align*}
  Using the symmetry assumption once again, we obtain
  \begin{align*}
  &f_3\Big(\tau_M\big(m(t_1^-)\big),w_1,\tau\big({\bf S}_1\big),{\bf q}_{{\bf S}_1}(t_1^-),\tau\big(D_1\big)\Big) \\
  & \qquad\qquad\qquad\qquad\qquad\qquad\qquad\quad = \tau_M\Big(f_3\big(m(t_1^-),w_1,{\bf S}_1,{\bf q}_{{\bf S}_1}(t_1^-),D_1\big)\Big) \\
  & \qquad\qquad\qquad\qquad\qquad\qquad\qquad\quad = \tau_M\big(m(t_1)\big).
  \end{align*}
  We conclude that
  \[ \sigma_M\big(m(t_1)\big)=\tau_M\big(m(t_1)\big). \]

Finally, since the memory states at time $t_1$ are still equal, we can proceed inductively by applying the same argument to obtain that, for $k=2,\dots,c+1$, we have $\sigma\big(R_{k,i}\big)=\tau\big(R_{k,i}\big)$ for $i=1,\dots,|{\bf S}_k|$, and $\sigma\big(R'_k\big)=\tau\big(R'_k\big)$. It follows that $\sigma\big(\overline{R}\big) = \tau\big(\overline{R}\big)$.


\bibliographystyle{imsart-number}
\bibliography{references}
\end{document}